\title{Representing two-parameter persistence modules via graphcodes} 
\titlerunning{Modules via graphcodes} 
\author{Michael Kerber}{Institute of Geometry, Graz University of Technology, Austria}{kerber@tugraz.at}{https://orcid.org/0000-0002-8030-9299}{Austrian Science Fund (FWF) grant P 33765-N} 
\author{Florian Russold}{Institute of Geometry, Graz University of Technology, Austria}{russold@tugraz.at}{https://orcid.org/0009-0003-2978-0477}{Austrian Science Fund (FWF) grant P 33765-N and W1230}
\authorrunning{Kerber, Russold} 
\keywords{Topological Data Analysis, Multi-Parameter Persistence} 
\newcommand{\ignore}[1]{}
\DeclareMathSymbol{\minus}{\mathbin}{AMSa}{"39}
\newcommand{\expand}{\mathrm{Exp}}
\newcommand{\compress}{\mathrm{Com}}
\newcommand{\plus}{\raisebox{0\height}{\scalebox{.6}{+}}}
\newcommand{\epm}{P}
\newcommand{\pres}{\mathcal{P}}
\begin{document}

\maketitle

\begin{abstract}
Graphcodes were recently introduced as a technique to
employ two-parameter persistence modules in machine learning tasks
(Kerber and Russold, NeurIPS 2024).
We show in this work that a compressed version of graphcodes
yields a description of a two-parameter module that is equivalent
to a presentation of the module.
This alternative representation as a graph allows for a simple
translation between combinatorics and algebra: connected components
of the graphcode correspond to summands of the module and isolated paths correspond to intervals.
We demonstrate that graphcodes are useful in practice by speeding-up
the task of decomposing a module into indecomposable summands.
Also, the graphcode viewpoint allows to devise a simple algorithm
to decide whether a persistence module is interval-decomposable
in $O(n^4)$ time,
which improves on the previous bound of $O(n^{2\omega+1})$.
\end{abstract}

\section{Introduction}
Multi-parameter persistent homology is a rapidly evolving research branch
in topological data analysis. Recent years have witnessed substantial
advances in algorithms, especially for the case of two parameters,
which enable the use of multi-parameter invariants in applications~\cite{BotnanLesnick}.

\emph{Graphcodes}, recently introduced by Kerber and Russold~\cite{russold2024graphcode},
provide a complete summary of a two-parameter persistence module
over $\mathbb{Z}_2$.
They consider such a module as a stack of one-parameter modules
connected by maps between consecutive modules in the stack.
Describing every one-parameter module as a persistent barcode
and every connecting map as a bipartite graph yields a vertex-labeled graph
that can be input to a graph neural network architecture and used in machine
learning pipelines, for instance for image or graph classification tasks.

\subparagraph{Our results.} 
We demonstrate that considering two-parameter persistence
modules as a stack of barcodes with connecting maps,
or algebraically speaking as a persistence module
of persistence modules, 
provides a representation of persistence modules alternative
to a (minimal) presentation that nicely reveals certain aspects
of the module and can be leveraged in computational tasks.
In detail, our contributions are as follows:
\begin{itemize}
\item We extend the graphcode in~\cite{russold2024graphcode} to \emph{compressed graphcodes}
which are significantly smaller in practice and can be computed
even more efficiently than the uncompressed counterpart.
\item We show that graph-theoretic concepts of the graphcode translate
into interesting properties of the underlying persistence module:
connected components of the graphcode correspond to summands of the
persistence module (which are not necessarily indecomposable), 
and components that form a path correspond
to intervals in the module.
\item We give efficient algorithms to compute a graphcode from a minimal presentation and vice versa. Both directions are based on matrix reduction and have cubic worst-case complexity.
\item We demonstrate that compressed graphcodes can be computed fast
in practice and yield representations whose size is comparable to that of minimal presentations.
\item We show that preprocessing a persistence module through graphcodes
to split it into summands helps for the task of computing an indecomposable
decomposition of a persistence module,
in combination with a very recent decomposition algorithm~\cite{aida}. 
\item We present a simple $O(n^4)$ algorithm to decide whether
  a distinctly-graded two-parameter presentation of size $n$ 
  is interval-decomposable.
  This improves the best-known previous
  bound of $O(n^{2\omega+1})$ for this problem by Dey and Xin~\cite{DeyXin2022}
  in the distinctly-graded case (their algorithm is phrased for
  simplicial input but works equally for presentations).\footnote{The aforementioned paper~\cite{aida} outlines a different $O(n^3)$-algorithm for the same problem.}
  Our algorithm is based on standard matrix reduction and can be seen as a
  generalization of the $1$-parameter persistence algorithm of general persistence
  modules as described in~\cite{cdn-persistent,dey_et_al:LIPIcs.SoCG.2024.51,jnt-space}.
\end{itemize}

\subparagraph{Related work.}
The idea of examining a two-parameter persistence module through $1$-parameter
restrictions has appeared in different forms, for instance when computing
the matching distance between persistence modules~\cite{bcfg-new,Biasotti2008,bk-asymptotic,klo-exact,kn-efficient}, when visualizing two-parameter modules~\cite{lesnick2015interactive}, or for sliced Wasserstein
kernels~\cite{DBLP:conf/icml/CarriereCO17,DBLP:conf/scalespace/RabinPDB11}.
A crucial difference between graphcodes and these approaches is
that graphcodes only consider $1$-parameter restrictions in one direction,
but also capture the relation between consecutive levels. In that sense,
while a graphcode is not an invariant of the persistence module, it captures
the complete information of the persistence module, while the previous
approaches focus on the rank invariant and hence are incomplete.

Persistent Vineyards~\cite{cem-vines} are another approach that considers
a stack of persistence diagrams and tracks the continuous trajectories of the dots
along the diagrams. While similar in spirit,
vineyards work in a different setting where the simplicial complex is fixed
and only its filtration order changes over time. In contrast, in a bifiltered
simplicial complex, the filtration increases from level to level.
This leads to features impossible in vineyards, for instance dots in a
persistence diagram can branch out, or die out away from the diagonal.

Decomposing a persistence module into indecomposable summands is a natural
way to understand the structure of a module, generalizing the
barcode/persistence diagram
of $1$-parameter persistence. The decomposition is essentially unique
due to the Krull-Remak-Schmidt theorem, but the indecomposable elements
can be arbitrarily complicated. Nevertheless, the decomposition for a concrete
module can be computed in polynomial time using the more general
meataxe algorithm~\cite{Parker84}. Dey and Xin~\cite{DeyXin2022} give
a faster algorithm tailored for the case of persistence modules
based on the assumption that the presentation is distinctly graded.
Dey, Jendrysiak, and Kerber~\cite{aida} very recently lifted this assumption
and provide an efficient implementation of their algorithm in the \textsc{aida} library.
While graphcodes do not yield the full decomposition, they still partially
reveal the structure of the module and, for instance,
facilitate a full decomposition as we demonstrate.

Intervals are the simplest indecomposable summands, generalizing the
birth-death pairs in the $1$-parameter case. The special case
of modules that fully decompose into intervals received special attention
in the literature. For instance,
such modules allow for a complete invariant~\cite{ckm-discriminating},
for more efficient algorithmic approaches~\cite{dx-computing}
and for better approximation guarantees~\cite{lcb-efficient}.
Also, persistence modules have many intervals,
and some of them can be extracted quickly in the case of $0$-dimensional
homology~\cite{ak-decomposition}~-- our result can be seen as
a generalization for arbitrary modules, as we can detect intervals
by just identifying paths in the graphcode.

While recent work proves that fully interval-decomposable modules are rare
in a probabilistic sense~\cite{aks-probabilistic}, it is still useful
to determine whether a module has this property. The approach by Dey, Kim,
and Memoli~\cite{DBLP:journals/dcg/DeyKM24} is based on a predicate that
decides quickly whether a given interval is a summand of the module,
and iterates over a set of possible interval shapes; see also~\cite{ABENY}.
Our algorithm, in contrast, avoids such an iteration and rather constructs
the intervals in a greedy fashion and gives up if it detects an obstruction.

\subparagraph{Outline.}
We review basic definitions in Section~\ref{sec:basic_notions}.
We define (compressed) graphcodes and show their major properties in
Section~\ref{sec:graphcodes}. We present the algorithms to convert
between minimal presentations and compressed graphcodes in Section~\ref{sec:computation}.
We evaluate these algorithms experimentally in Section~\ref{sec:experiments}
and show their usefulness for module decompositions.
We describe the algorithm to decide on interval-decomposability
in Section~\ref{sec:interval_decomposability}.
We conclude in Section~\ref{sec:conclusion}. 

\section{Persistence modules}
\label{sec:basic_notions}

We denote by $G(m,n)$ the poset $\{1,\ldots,m\}\times \{1,\ldots,n\}$ with the product order. A \emph{persistence module} is a functor $M\colon G(m,n)\rightarrow \mathbf{Vec}$ where we assume all vector spaces to be finite dimensional over $\mathbb{Z}_2$. In other words, a persistence module is a digram of vector spaces and linear maps over $G(m,n)$ as depicted in \eqref{eq:persmod_slices} on the left. We denote by $\mathbf{Vec}^{G(m,n)}$ the category of persistence modules where the morphisms are natural transformations.
\begin{equation} \label{eq:persmod_slices}
\begin{tikzcd}[column sep=large,row sep=large]
M_{(1,n)} \arrow[r,"M_{(1,n)}^{(2,n)}"] & M_{(2,n)} \arrow[r,"M_{(2,n)}^{(3,n)}"] & \cdots \arrow[r,"M_{(m\minus 1,n)}^{(m,n)}"] &[5pt] M_{(m,n)} &[-10pt] \colon &[-10pt] M_{(\minus,n)} \\
\vdots \arrow[u,"M_{(1,n\minus 1)}^{(1,n)}"] & \vdots \arrow[u,swap,"M_{(2,n\minus 1)}^{(2,n)}"] & & \vdots \arrow[u,swap,"M_{(m,n\minus 1)}^{(m,n)}"] & & \vdots \arrow[u,swap,"M_{(\minus,n\minus 1)}^{(\minus,n)}"] \\
M_{(1,2)} \arrow[r,"M_{(1,2)}^{(2,2)}"] \arrow[u,"M_{(1,2)}^{(1,3)}"] & M_{(2,2)} \arrow[r,"M_{(2,2)}^{(3,2)}"] \arrow[u,swap,"M_{(2,2)}^{(2,3)}"] & \cdots \arrow[r,"M_{(m\minus 1,2)}^{(m,2)}"] & M_{(m,2)} \arrow[u,swap,"M_{(m,2)}^{(m,3)}"] & \colon & M_{(\minus,2)}  \arrow[u,swap,"M_{(\minus,2)}^{(\minus,3)}"]  \\
M_{(1,1)} \arrow[r,"M_{(1,1)}^{(2,1)}"] \arrow[u,"M_{(1,1)}^{(1,2)}"] & M_{(2,1)} \arrow[r,"M_{(2,1)}^{(3,1)}"] \arrow[u,swap,"M_{(2,1)}^{(2,2)}"] & \cdots \arrow[r,"M_{(m\minus 1,1)}^{(m,1)}"] & M_{(m,1)} \arrow[u,swap,"M_{(m,1)}^{(m,2)}"] & \colon & M_{(\minus,1)}  \arrow[u,swap,"M_{(\minus,1)}^{(\minus,2)}"] 
\end{tikzcd}
\end{equation}
Throughout this work, we will adopt an alternative perspective on two-parameter persistence modules as persistence modules of one-parameter persistence modules:
If we fix the second coordinate of $M_{(k,l)}$ in \eqref{eq:persmod_slices} to $l=i$, we obtain the horizontal slice $M_{(\minus,i)}$ which is a one-parameter persistence module $M_{(\minus,i)}\colon G(m,1)\rightarrow \mathbf{Vec}$. For any two consecutive slices $M_{(\minus,i)}$ and $M_{(\minus,i\plus 1)}$, the family of vertical linear maps $M_{(\minus,i)}^{(\minus,i\plus 1)}$ connecting the slices form a morphism of one-parameter persistence modules $M_{(\minus,i)}^{(\minus,i\plus 1)}\colon M_{(\minus,i)}\rightarrow M_{(\minus,i\plus 1)}$. Therefore, the two-parameter persistence module $M$ is a persistence module of one-parameter persistence modules $M\colon G(1,n)\rightarrow \mathbf{Vec}^{G(m,1)}$ depicted in \eqref{eq:persmod_slices} on the right. Conversely, a persistence module of one-parameter persistence modules $M\colon G(1,n)\rightarrow \mathbf{Vec}^{G(m,1)}$ determines a two-parameter persistence module. We will refer to this representation as the \emph{slicewise perspective} in the text.

\ignore{
In applications persistence modules typically arise from the homology of (bi)filtered simplicial complexes. A \emph{bifiltered simplicial complex} is a functor $K\colon G(m,n)\rightarrow \mathbf{SCpx}$ such that $K(x\leq y)$ is an inclusion for all $x\leq y\in G(m,n)$. If $K$ is a bifiltration, then $H_\ell(K)\colon G(m,n)\rightarrow\mathbf{Vec}$ is a persistence module. We note that we do not use real valued indexing, i.e.\ functors $\mathbb{R}^2\rightarrow \mathbf{Vec}$ as it is not necessary in our context but we can always extend a persistence module $M\colon G(m,n)\rightarrow\mathbf{Vec}$ to a persistence module $M'\colon \mathbb{R}^2\rightarrow \mathbf{Vec}$ along an embedding $\iota\colon G(m,n)\xhookrightarrow{} \mathbb{R}^2$ assigning the points in $G(m,n)$ real coordinates. \textcolor{red}{Maybe mention distinctly graded and one-critical.}
}

We define the \emph{support} of $M$ as $\text{supp}(M)\coloneqq\{x\in G(m,n)\vert M(x)\neq 0\}$. A persistence module is called \emph{thin} if $\text{dim }M(x)\leq 1\hspace{2pt}\forall x\in G(m,n)$. An indecomposable thin persistence module whose support is a convex connected subset of $G(m,n)$ is called an \emph{interval module}. By the theorems of Gabriel and Krull-Remak-Schmidt, every one-parameter persistence module $M\colon G(m,1)\rightarrow\mathbf{Vec}$ is isomorphic to a sum of interval modules $\bigoplus_{k=1}^s I_k$. 
Crucially for us, morphisms between one-parameter interval modules are very simple. Consider two interval modules $I,J\colon G(m,1)\rightarrow \mathbf{Vec}$ such that $I=[b_1,d_1)$ and $J=[b_2,d_2)$ as depicted in \eqref{eq:interval_module_entanglement}. A morphism from $I$ to $J$ is a sequence of vertical maps as in \eqref{eq:interval_module_entanglement} such that all squares commute.
\begin{equation} \label{eq:interval_module_entanglement}
\begin{tikzcd}
&[-2pt] \color{red} b_2 &[-2pt] &[-2pt] \color{red} b_1 &[-2pt] &[-2pt] &[-2pt] &[-2pt] \color{red}d_2 &[-3pt] \color{red} d_1\phantom{0} \\[-17pt]
0 \arrow[r] & \mathbb{Z}_2 \arrow[r,"1"] & \mathbb{Z}_2 \arrow[r,"1"] & \mathbb{Z}_2 \arrow[r,"1"] & \mathbb{Z}_2 \arrow[r,"1"] & \mathbb{Z}_2 \arrow[r,"1"] & \mathbb{Z}_2 \arrow[r] & 0 \arrow[r]  & 0 &[-25pt] \colon &[-25pt] J \\
0 \arrow[r] \arrow[u] & 0 \arrow[r] \arrow[u] & 0 \arrow[r] \arrow[u] & \mathbb{Z}_2 \arrow[r,"1"] \arrow[u,"1"] & \mathbb{Z}_2 \arrow[r,"1"] \arrow[u,"1"] & \mathbb{Z}_2 \arrow[r,"1"] \arrow[u,"1"] & \mathbb{Z}_2 \arrow[r,"1"] \arrow[u,"1"] & \mathbb{Z}_2 \arrow[r] \arrow[u] & 0 \arrow[u] & \colon & I \arrow[u,swap,"1"] \\[-17pt]
\end{tikzcd}
\end{equation}
This condition implies that a morphism that is non-zero at some point in $I\cap J$ has to be non-zero at every point in $I\cap J$. It also implies that the interval $J$ cannot start or end later than $I$. In order words,
\begin{equation}
\text{Hom}(I,J)\cong\begin{cases} \mathbb{Z}_2 & \colon \text{ if}\hspace{5pt}b_2\leq b_1<d_2\leq d_1 \\ 0 & \colon \text{ otherwise} \end{cases}
\end{equation}
which means that there is either no or a unique non-zero morphism between two-interval modules. We call two intervals $I$ and $J$, satisfying $b_2\leq b_1<d_2\leq d_1$, to be \emph{entangled}, which we denote by $J\lhd I$.

\section{Graphcodes}
\label{sec:graphcodes}

A graphcode is a combinatorial description of a two-parameter persistence module as a graph.
They are most naturally introduced by considering a special case of persistence modules first:
Consider a sequence of one-parameter persistence modules $M_{(\minus,1)},\ldots,M_{(\minus,n)}$
where every module is a direct sum of interval modules,
i.e.\ $M_{(\minus,i)}= \bigoplus_{k=1}^{s_i} I_k^i$ with every $I_k^i$ a one-parameter interval.
Let $\eta^i$ denote a morphism from $M_{(\minus,i)}$ to $M_{(\minus,i\plus 1)}$. Then, the diagram
\begin{equation} \label{eq:interval_module_module}
\begin{tikzcd}[column sep=large]
\bigoplus_{k=1}^{s_1} I^1_k \arrow[r,"\eta^1"] & \bigoplus_{k=1}^{s_2} I^2_k \arrow[r,"\eta^2"] & \cdots \arrow[r,"\eta^{n\minus 1}"] & \bigoplus_{k=1}^{s_n} I^n_k 
\end{tikzcd}
\end{equation}
yields a two-parameter persistence module $M$ (following the slicewise perspective from Section~\ref{sec:basic_notions}).
Moreover, since 
\begin{equation}
\eta^i\in \text{Hom}\left(\bigoplus_{k=1}^{s_i} I_k^i\hspace{2pt},\hspace{2pt}\bigoplus_{l=1}^{s_{i\plus 1}} I_l^{i\plus 1}\right)\cong \bigoplus_{l=1}^{s_{i\plus 1}}\bigoplus_{k=1}^{s_i}\text{Hom}\left(I_k^i,I_l^{i \plus1}\right)
\end{equation}
is a morphism between direct sums of intervals, it is completely described by the morphisms $\eta^i_{lk}\colon I^i_k\rightarrow I^{i\plus 1}_l$ between individual summands. As discussed, $\text{Hom}\left(I_k^i,I_l^{i \plus1}\right)$ either contains no or a unique non-zero morphism. This allows us to describe $\eta^i$ by a matrix 
\begin{equation}
\eta^i=
\begin{blockarray}{cccc}
& I_1^i & \cdots & I_{s_i}^i \\[3pt]
\begin{block}{c(ccc)}
I_1^{i\plus 1} & \eta^i_{11} & \cdots & \eta^i_{1s_i} \\[2pt] 
\vdots & \vdots & \ddots & \vdots \\[2pt]
I_{s_{i\plus 1}}^{i\plus 1} & \eta^i_{s_{i\plus 1}1} & \cdots & \eta^i_{s_{i\plus 1}s_i} \\
\end{block}
\end{blockarray}  
\end{equation}
with coefficients $\eta^i_{lk}\in\mathbb{Z}_2$ where $\eta^i_{lk}=1$ denotes the unique non-zero morphism and $\eta^i_{lk}=0$ the zero morphism. Therefore, $M$ can be described as a sequence of matrices $\eta^1,\ldots,\eta^{n\minus 1}$ whose columns and rows are labeled with intervals, as depicted at the top of Figure~\ref{fig:example_graphcode}. It is easy to see how this description induces a two-parameter persistence module: At a point $(x,y)\in G(m,n)$ the vector space $M_{(x,y)}$ is generated by all intervals $I^y_k$ such that $x\in I_k^y$. The horizontal map $M_{(x,y)}^{(x\plus 1,y)}$ sends a basis element corresponding to $I^y_k$ to itself if $x+1\in I^y_k$ and to zero otherwise. The vertical map $M_{(x,y)}^{(x,y\plus 1)}$ is defined by the matrix $\eta^y$ restricted to all intervals $I^{y}_k$ and $I^{y\plus 1}_l$ that contain $x$. Note that, except for the interval labels, this description of $M$ by $\eta^1,\ldots,\eta^{n\minus 1}$ is completely analogous to the description of a one-parameter persistence module by a sequence of matrices. 

This description of $M$ suggests the following combinatorial representation depicted in Figure \ref{fig:example_graphcode}: The intervals $I_1^i,\ldots,I_{s_i}^i$ in each $M_{(\minus,i)}$ can be summarized as a persistence diagram. In this way we obtain a stack of persistence diagrams where the points in the $i$-th diagram sit at height $i$. The matrices $\eta^i$ can be viewed as adjacency matrices of bipartite graphs with vertices the persistence diagrams of $M_{(\minus,i)}$ and $M_{(\minus,i\plus 1)}$, where a non-zero entry $\eta^i_{lk}$ implies an edge from $I_k^i$ to $I_l^{i\plus 1}$. These observations motivate the following definition.

\begin{figure}
    \centering
    \includegraphics[width=0.8\linewidth]{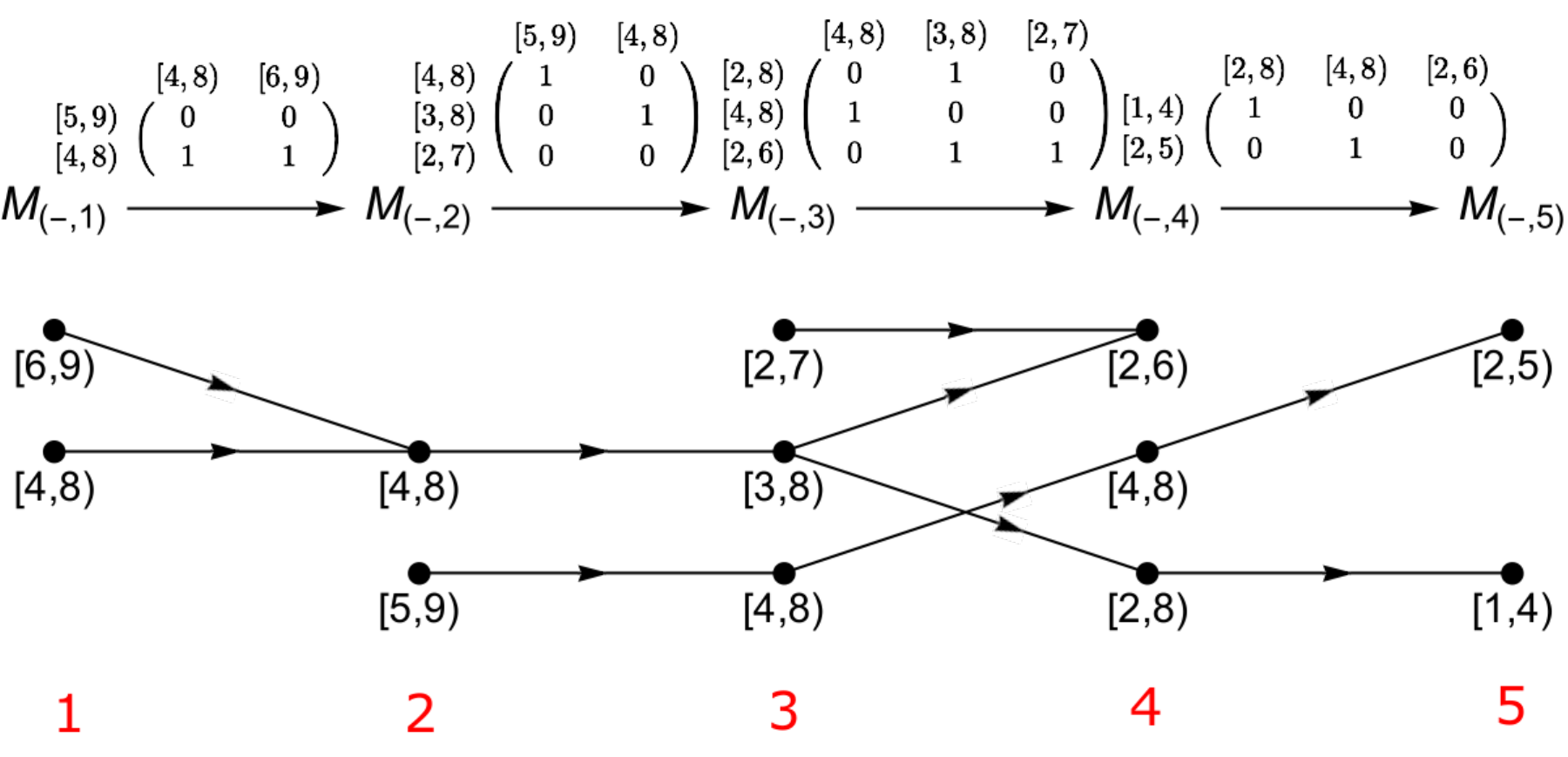}
    \caption{Construction of graphcode from matrix representations.}
    \label{fig:example_graphcode}
\end{figure}

\begin{definition}[Graphcode]
  \label{def:graphcode}
A directed graph $\mathcal{G}=(V,E,c)$ with vertex labels 
\begin{equation*}
c\colon V\rightarrow \{1,\ldots,m\}\times\{1,\ldots,m+1\}\times\{1,\ldots,n\}
\end{equation*}
is called a \emph{graphcode} if:
\begin{enumerate}
    \item for all $v\in V$ with $c(v)=(b,d,h)$, we have $b<d$
    \item for all $(v,w)\in E$ with $c(v)=(b_1,d_1,h_1)$ and $c(w)=(b_2,d_2,h_2)$, \\ we have $h_2=h_1+1$ and $[b_2,d_2)\lhd [b_1,d_1)$.
\end{enumerate}
\end{definition}
In words, for $c(v) = \big(b(v), d(v), h(v)\big)$, we call the coordinates of $c(v)$ the birth, death, and height of
$v$, respectively. Then, the first property implies that $v$ can be interpreted as an interval with lifetime $[b, d)$, belonging to a persistent barcode at height $h$. The second property implies that
all edges in the graphcode are between vertices of consecutive heights and entangled intervals. In contrast to \cite{russold2024graphcode}, we use directed edges for the definition of graphcodes to simplify certain definitions and arguments.

The slicewise persistence module $M$ in \eqref{eq:interval_module_module} induces a graphcode $\mathcal{GC}(M)\coloneqq(V,E,c)$, according to Definition \ref{def:graphcode}, in the following way: 
\begin{align*}
& V\coloneqq\bigcup_{i=1}^n \{I^i_1,\ldots, I^i_{s_i}\} \\
& E\coloneqq\{(I^i_k,I^{i\plus 1}_l)\vert \eta^i_{lk}=1\} \\[5pt]
& c(I^i_k)\coloneqq \big(b,d,i\big) \hspace{5pt} \text{for} \hspace{5pt} I^i_k=[b,d) .
\end{align*} 

Conversely, a graphcode $\mathcal{G}=(V,E,c)$ induces a persistence module of one-parameter persistence modules $PM(\mathcal{G})\colon G(1,n)\rightarrow \mathbf{Vec}^{G(m,1)}$ and, thus, a two-parameter persistence module, in the following way: For every $1\leq i\leq n$, define $PM(\mathcal{G})_{(\minus,i)}\coloneqq\underset{v\colon h(v)=i}{\bigoplus} [b(v),d(v))$.  For all $1\leq i<n$, define $\eta^i$ as the matrix whose columns are labeled by the intervals $[b(v),d(v))$ with $h(v)=i$, whose rows are labeled by the intervals $[b(w),d(w))$ with $h(w)=i+1$ and whose entries $\eta^i_{wv}$ are $1$ if $(v,w)\in E$ and $0$ otherwise. By the first property of a graphcode, $[b(v),d(v))$ defines a valid one-parameter interval module for all $v\in V$. By the second property of a graphcode, for an edge $(v,w)$, we have $[b(w),d(w))\lhd [b(v),d(v))$. Hence, there exists a non-zero morphism of interval modules $[b(v),d(v))\xrightarrow{1} [b(w),d(w))$ and $\eta^i$ induces a well-defined morphism $PM(\mathcal{G})_{(\minus,i)}^{(\minus,i\plus 1)}\colon PM(\mathcal{G})_{(\minus,i)}\rightarrow PM(\mathcal{G})_{(\minus,i\plus 1)}$. This construction of persistence modules from graphcodes has the following nice properties which follow directly from the definition.

\begin{proposition} \label{prop:persmod_from_graphcode_union}
If $\mathcal{G}_1$ and $\mathcal{G}_2$ are graphcodes that are disjoint as graphs, then $PM(\mathcal{G}_1\sqcup \mathcal{G}_2)=PM(\mathcal{G}_1)\oplus PM(\mathcal{G}_2)$.
\end{proposition}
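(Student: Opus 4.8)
The plan is to unwind the definition of the functor $PM(-)$ on both sides of the claimed equality and check that it respects the disjoint-union/direct-sum correspondence level by level. First I would work entirely in the slicewise perspective. Recall from the construction preceding the proposition that, for a graphcode $\mathcal{G}=(V,E,c)$ and each height $1\le i\le n$, the $i$-th slice of $PM(\mathcal{G})$ is $PM(\mathcal{G})_{(\minus,i)}=\bigoplus_{v\colon h(v)=i}[b(v),d(v))$, and the connecting morphism $\eta^i$ is the matrix whose columns and rows are labelled by the interval summands at heights $i$ and $i+1$ and whose entry $\eta^i_{wv}$ is $1$ exactly when $(v,w)\in E$.

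Now write $\mathcal{G}_1\sqcup\mathcal{G}_2=(V_1\sqcup V_2,\,E_1\sqcup E_2,\,c_1\sqcup c_2)$. The set of vertices of height $i$ in $\mathcal{G}_1\sqcup\mathcal{G}_2$ is the disjoint union of those of $\mathcal{G}_1$ and of $\mathcal{G}_2$, so on objects
\[
PM(\mathcal{G}_1\sqcup\mathcal{G}_2)_{(\minus,i)}=\Big(\bigoplus_{v\in V_1,\,h(v)=i}[b(v),d(v))\Big)\oplus\Big(\bigoplus_{v\in V_2,\,h(v)=i}[b(v),d(v))\Big)=PM(\mathcal{G}_1)_{(\minus,i)}\oplus PM(\mathcal{G}_2)_{(\minus,i)}.
\]
Next, because $\mathcal{G}_1$ and $\mathcal{G}_2$ are disjoint as graphs, no edge of $E_1\sqcup E_2$ joins a vertex of $V_1$ to a vertex of $V_2$; ordering the row and column labels so that all summands coming from $\mathcal{G}_1$ precede those from $\mathcal{G}_2$, the matrix $\eta^i$ of $PM(\mathcal{G}_1\sqcup\mathcal{G}_2)$ is block diagonal, with the two diagonal blocks equal to the matrices $\eta^i$ defining the connecting maps of $PM(\mathcal{G}_1)$ and of $PM(\mathcal{G}_2)$. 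Since a block-diagonal matrix between direct sums of interval modules represents precisely the direct sum of the two morphisms it encodes, under the identification above we get $PM(\mathcal{G}_1\sqcup\mathcal{G}_2)_{(\minus,i)}^{(\minus,i\plus 1)}=PM(\mathcal{G}_1)_{(\minus,i)}^{(\minus,i\plus 1)}\oplus PM(\mathcal{G}_2)_{(\minus,i)}^{(\minus,i\plus 1)}$ for every $i$.

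Finally I would invoke that direct sums in $\mathbf{Vec}^{G(1,n)}$, i.e.\ of persistence modules of one-parameter persistence modules, are computed slicewise on both objects and morphisms; the two displays then say exactly that $PM(\mathcal{G}_1\sqcup\mathcal{G}_2)=PM(\mathcal{G}_1)\oplus PM(\mathcal{G}_2)$ in the slicewise category, and since the passage from the slicewise perspective to ordinary two-parameter persistence modules is additive (direct sums are computed pointwise over $G(m,n)$ in either description), the identity also holds for the associated two-parameter modules. There is no genuine obstacle here: the statement is a bookkeeping check, and the only point deserving a sentence of care is that the block-decomposition of the connecting matrices at all heights genuinely assembles into an internal direct sum decomposition of the whole persistence module of persistence modules — which it does because objects and morphisms split compatibly at every height.
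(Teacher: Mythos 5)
Your proof is correct and is precisely the direct verification the paper has in mind: the paper states that this proposition "follows directly from the definition" and gives no further argument, and your slicewise, block-diagonal unwinding is exactly that check carried out in full.
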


\noindent
As an example, the persistence module induced by the graphcode shown in Figure \ref{fig:example_graphcode}, is the direct sum of the persistence modules induced by its two connected components. 

\begin{figure}
    \centering
    \includegraphics[width=0.95\linewidth]{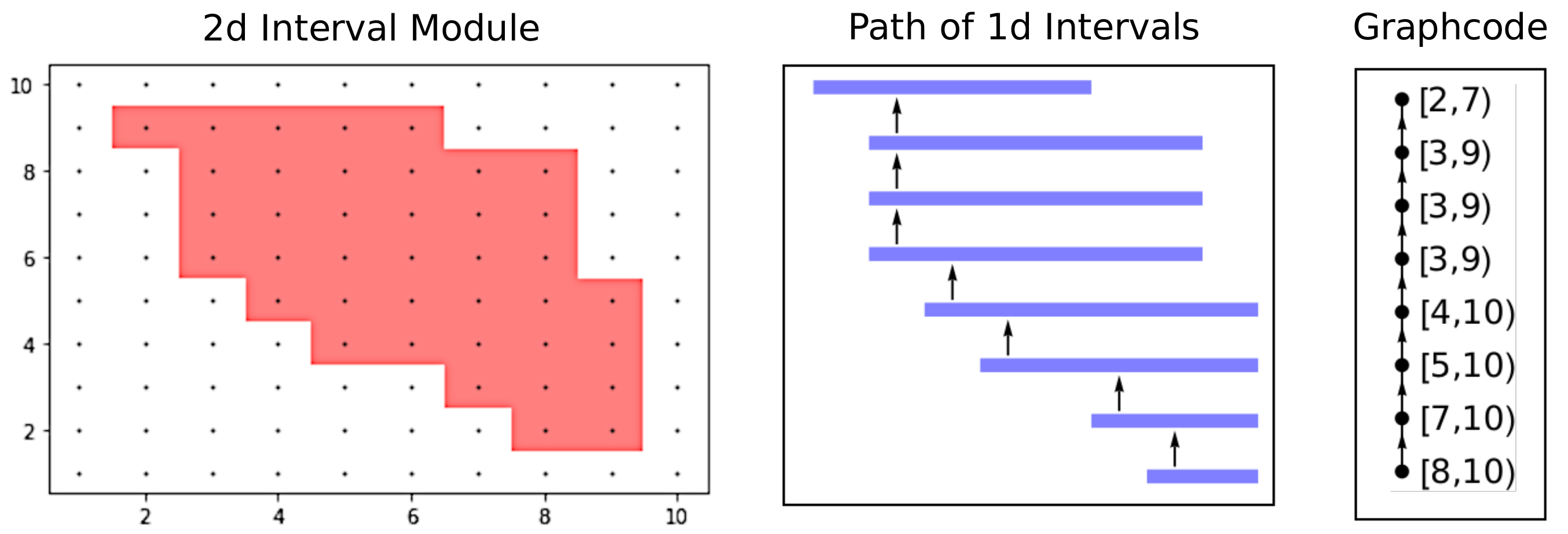}
    \caption{Graphcode of interval module supported on the red shaded area.}
    \label{fig:graphcode_interval}
\end{figure}

\begin{proposition} \label{prop:persmod_from_path}
If $\mathcal{G}$ is a graphcode whose underlying graph is a directed path, then $PM(\mathcal{G})$ is an interval module.
\end{proposition}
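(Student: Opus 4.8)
The plan is to analyze the two-parameter module $PM(\mathcal{G})$ directly through the explicit description of its vector spaces and maps given just before Proposition~\ref{prop:persmod_from_graphcode_union}, and show that $PM(\mathcal{G})$ satisfies the three defining properties of an interval module: it is thin, indecomposable, and has convex connected support. Write the directed path as $v_1 \to v_2 \to \cdots \to v_n$ with $c(v_i) = (b_i, d_i, i)$ (note that consecutive vertices in the path necessarily have consecutive heights by the second graphcode axiom, so after reindexing the path visits heights $1, 2, \ldots, n$ in order, one vertex per height). Then $PM(\mathcal{G})_{(\minus,i)} = [b_i, d_i)$ is a single interval module at height $i$, and the connecting morphism $\eta^i\colon [b_i,d_i) \to [b_{i+1},d_{i+1})$ is the unique nonzero morphism of interval modules, which exists precisely because $[b_{i+1},d_{i+1}) \lhd [b_i,d_i)$, i.e. $b_{i+1} \le b_i < d_{i+1} \le d_i$.

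First I would establish thinness: at a point $(x,i) \in G(m,n)$, the space $PM(\mathcal{G})_{(x,i)}$ is spanned by those vertices $v$ with $h(v)=i$ and $x \in [b(v),d(v))$; since the path has exactly one vertex at each height, this dimension is at most $1$. Next I would compute the support: by the entanglement chain, $b_1 \ge b_2 \ge \cdots \ge b_n$ and $d_1 \ge d_2 \ge \cdots \ge d_n$, so $\operatorname{supp}(PM(\mathcal{G})) = \bigcup_i [b_i, d_i) \times \{i\}$ is a "staircase" region. I would argue convexity: if $(x,i)$ and $(x,j)$ lie in the support with $i < j$, then $b_i \le b_j \le \ldots$ wait — actually $b_j \le b_i$ and $d_i \le d_j$ fails in the wrong direction, so I need to be careful: entanglement gives $b_{i+1}\le b_i$ and $d_{i+1}\le d_i$, hence for $i<j$ we have $b_j \le b_i$ and $d_j \le d_i$; thus $[b_i,d_i) \subseteq [b_j, d_j)$ only on the left endpoint and $\supseteq$ on the right — so for any $k$ with $i \le k \le j$, $[b_i,d_i) \cap [b_j,d_j) = [\max(b_i,b_j), \min(d_i,d_j)) = [b_i, d_j)$ is contained in $[b_k,d_k)$ since $b_k \le b_i$ and $d_k \ge d_j$. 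This yields convexity of the support (convexity in $G(m,n)$ with the product order means: if two points are in the support, the whole "rectangle" between them, intersected with... ) — more precisely I would show the support is an interval in the poset sense: downward and upward paths within it stay in it. Connectedness then follows since each horizontal slice is a nonempty interval and consecutive slices overlap (as $[b_i, d_j) \ne \emptyset$ when... one must check nonemptiness, which follows from $b_i < d_{i+1}$ in the entanglement condition applied stepwise, giving $b_i < d_j$ for $i<j$).

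Finally I would prove indecomposability. The cleanest route: show that $\operatorname{End}(PM(\mathcal{G}))$ is local, or equivalently (working over $\mathbb{Z}_2$) that the only idempotent endomorphisms are $0$ and $1$. An endomorphism $\varphi$ restricts on each slice to an endomorphism of the interval module $[b_i,d_i)$, which is either $0$ or $1$; commutativity with the nonzero maps $\eta^i$ forces the scalar to be constant along the path (if $\varphi$ is $1$ on slice $i$ and $0$ on slice $i+1$, the square with $\eta^i \ne 0$ fails to commute, and vice versa). Hence $\varphi \in \{0,1\}$, so $PM(\mathcal{G})$ is indecomposable and thin with convex connected support — an interval module. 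The main obstacle I anticipate is getting the convexity bookkeeping exactly right: the entanglement relation squeezes births and deaths monotonically, and one must verify that "convex connected subset of $G(m,n)$" is satisfied in the precise sense used by the authors (presumably: a subposet closed under taking the interval $[p,q]$ between any two comparable-or-boundable elements it contains, together with path-connectedness of the Hasse diagram); this is a routine but slightly fiddly check that the staircase region $\{(x,i) : b_i \le x < d_i\}$ has this property, which follows from the two monotonicity chains and the stepwise nonemptiness $b_i < d_{i+1}$.
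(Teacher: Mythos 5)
The paper gives no explicit proof here: it states that Propositions~\ref{prop:persmod_from_graphcode_union} and~\ref{prop:persmod_from_path} ``follow directly from the definition,'' and your detailed check of thinness, convex connected support, and indecomposability via the endomorphism ring is essentially the verification that is left implicit.

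There is, however, one genuine error in your argument: the claim that $b_i < d_j$ for all $i<j$ is false, and the ``stepwise'' derivation from $b_i < d_{i+1}$ does not work. Entanglement forces both $b_1\geq b_2\geq\cdots$ and $d_1\geq d_2\geq\cdots$, so nothing prevents $d_j$ from sliding below $b_i$ once $j-i\geq 2$; for instance $(b_1,d_1)=(5,10)$, $(b_2,d_2)=(3,6)$, $(b_3,d_3)=(1,4)$ satisfies the entanglement chain yet $b_1 = 5 \not< 4 = d_3$. Fortunately the claim is never actually needed: for connectedness you only need consecutive slices to overlap, which is exactly the entanglement inequality $b_{i+1}\leq b_i < d_{i+1}$; and for convexity, given $(x,i)\leq(y,k)\leq(x',j)$ in the product order with $(x,i)$ and $(x',j)$ in the support, the monotonicity of the chains gives $y\geq x\geq b_i\geq b_k$ and $y\leq x' < d_j \leq d_k$ directly, so the containment $[b_i,d_j)\subseteq[b_k,d_k)$ you wrote holds whether or not the left-hand side is nonempty and the intersection never has to be. Your convexity paragraph handles only the special case $x=x'$ and then trails off, so it would need the small two-line general computation above to be complete, but the idea is right. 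The indecomposability argument (endomorphisms of a slice are scalars in $\mathbb{Z}_2$, commutativity with the nonzero $\eta^i$ forces all scalars equal, so $\operatorname{End}\cong\mathbb{Z}_2$ has no nontrivial idempotents) is clean and correct. Aside from excising the spurious $b_i<d_j$ assertion and tightening the convexity step, this is a sound and direct proof of the proposition.
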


\noindent
Figure \ref{fig:graphcode_interval} shows an example of an interval module induced by a path.

\subparagraph{Graphcodes from arbitrary persistence modules.}  
We now consider an arbitrary persistence module $M\colon G(m,n)\rightarrow\mathbf{Vec}$ which we view as a persistence module of one-parameter persistence modules $M\colon G(1,n)\rightarrow\mathbf{Vec}^{G(m,1)}$. We know that every one-parameter persistence module $M_{(\minus,i)}$ is isomorphic to a direct sum of interval modules. Let $\mu_i\colon M_{(\minus,i)}\xrightarrow{\cong} \bigoplus_{k=1}^{s_i}I^i_k$ be an isomorphism. We call such an isomorphism $\mu_i$ a \emph{barcode basis}. Moreover, we call a collection of barcode bases $\mu=(\mu_1,\ldots,\mu_n)$ a barcode basis for $M$. Note that the isomorphism $\mu_i$ and, thus, the barcode basis is not unique. A barcode basis $\mu$ induces an isomorphism $\mu\colon M\xrightarrow{\cong} M_\mu$ to a persistence module of one-parameter persistence modules $M_\mu$ where each one-parameter slice $(M_\mu)_{(\minus,i)}$ is a direct sum of intervals:  
\begin{equation} \label{eq:equivalence}
\begin{tikzcd}[column sep=huge]
M_{(\minus,1)} \arrow[r,"M_{(\minus,1)}^{(\minus,2)}"] \arrow[d,"\cong","\mu_1"'] & M_{(\minus,2)} \arrow[r,"M_{(\minus,2)}^{(\minus,3)}"] \arrow[d,"\cong","\mu_2"'] & \cdots \arrow[r,"M_{(\minus,n\minus 1)}^{(\minus,n)}"] & M_{(\minus,n)} \arrow[d,swap,"\cong"',"\mu_n"] &[-30pt] \colon &[-50pt] M \arrow[d,"\mu"',"\cong"] \\
\bigoplus_{k=1}^{s_1} I^1_k \arrow[r,"\eta^1"] & \bigoplus_{k=1}^{s_2} I^2_k \arrow[r,"\eta^2"] & \cdots \arrow[r,"\eta^{n\minus 1}"] & \bigoplus_{k=1}^{s_n} I^n_k & \colon & M_\mu
\end{tikzcd}
\end{equation}
We now define the graphcode of the initial persistence module $M$ with respect to the barcode basis $\mu$ as $\mathcal{GC}_\mu(M)\coloneqq \mathcal{GC}(M_\mu)$. Obviously the persistence module $PM\big(\mathcal{GC}(M_\mu)\big)$, induced by the graphcode $\mathcal{GC}(M_\mu)$, is equal to $M_\mu$ up to permutation of the intervals in every $M_{(\minus,i)}$. Since the initial persistence module $M$ is isomorphic to $M_\mu$ we obtain the following proposition.

\begin{proposition} \label{prop:persmod_reconstruction}
Let $M$ be a persistence module and $\mu$ a barcode basis of $M$, then $PM\big(\mathcal{GC}_\mu(M)\big)\cong M$.
\end{proposition}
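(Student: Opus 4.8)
The plan is to chase the definitions and assemble an isomorphism $M \xrightarrow{\cong} PM\big(\mathcal{GC}_\mu(M)\big)$ out of the barcode basis $\mu$ and the construction $PM(\mathcal{GC}(-))$. First I would unwind the right-hand side: by definition $\mathcal{GC}_\mu(M) = \mathcal{GC}(M_\mu)$, so it suffices to show $PM\big(\mathcal{GC}(M_\mu)\big) \cong M_\mu$, and then invoke the isomorphism $\mu\colon M \xrightarrow{\cong} M_\mu$ from diagram~\eqref{eq:equivalence} to conclude $PM\big(\mathcal{GC}_\mu(M)\big) \cong M_\mu \cong M$ by composition. So the real content is the special case where each slice $(M_\mu)_{(\minus,i)} = \bigoplus_{k=1}^{s_i} I_k^i$ is already literally a direct sum of intervals, with connecting maps $\eta^1,\ldots,\eta^{n-1}$ as in~\eqref{eq:interval_module_module}.

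For that special case I would argue that $PM\big(\mathcal{GC}(M_\mu)\big)$ and $M_\mu$ agree on the nose up to reordering the interval summands within each slice. Spelling out the two constructions: $\mathcal{GC}(M_\mu)$ has vertex set $\bigsqcup_i \{I_1^i,\ldots,I_{s_i}^i\}$, an edge $(I_k^i, I_l^{i+1})$ exactly when $\eta^i_{lk}=1$, and labels $c(I_k^i) = (b,d,i)$ for $I_k^i = [b,d)$. Feeding this into $PM$: the $i$-th slice becomes $\bigoplus_{v\colon h(v)=i}[b(v),d(v)) = \bigoplus_{k=1}^{s_i} I_k^i$, and the matrix reconstructed for the $i$-th connecting map has entry $1$ in position $(w,v)$ iff $(v,w)\in E$ iff (by the edge construction) the corresponding entry of $\eta^i$ is $1$. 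Thus the reconstructed connecting matrix is exactly $\eta^i$ again, modulo the ordering of rows/columns that labels induce versus the original indexing $k=1,\ldots,s_i$. A permutation of the summands in each slice is an isomorphism of persistence modules (of one-parameter persistence modules), and these permutation isomorphisms commute with the $\eta^i$ by construction, so they assemble into a natural isomorphism $PM\big(\mathcal{GC}(M_\mu)\big) \xrightarrow{\cong} M_\mu$. Here I would appeal to the fact, established in the paragraph preceding Definition~\ref{def:graphcode}, that the matrix data $(\eta^1,\ldots,\eta^{n-1})$ together with the interval labels completely determines the two-parameter module $M_\mu$ via the slicewise perspective, so matching the data up to reindexing matches the modules up to isomorphism.

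Finally I would compose: $M \xrightarrow{\;\mu\;} M_\mu \xrightarrow{\;\cong\;} PM\big(\mathcal{GC}(M_\mu)\big) = PM\big(\mathcal{GC}_\mu(M)\big)$, both arrows being isomorphisms, which gives the claim. The main obstacle, such as it is, is purely bookkeeping: being careful that the bijection between graphcode vertices and interval summands is consistent across all slices so that the permutation isomorphisms are compatible with the connecting maps — but since the vertex set of $\mathcal{GC}(M_\mu)$ is by definition the disjoint union of the labeled summand sets, this compatibility is immediate and no genuine difficulty arises. I expect the whole proof to be short, essentially the one-sentence argument already sketched in the text after Proposition~\ref{prop:persmod_from_graphcode_union}, made precise.
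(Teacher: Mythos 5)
Your proposal is correct and follows exactly the route the paper takes: unwind $\mathcal{GC}_\mu(M)=\mathcal{GC}(M_\mu)$, observe that $PM\big(\mathcal{GC}(M_\mu)\big)$ reproduces the vertex/edge/matrix data of $M_\mu$ up to a permutation of the interval summands within each slice, and then compose with the isomorphism $\mu\colon M\xrightarrow{\cong}M_\mu$. The paper leaves this as the one-sentence remark preceding the proposition; you have simply made the same argument explicit.
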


\noindent
Proposition \ref{prop:persmod_reconstruction} implies that the graphcode of $M$ with respect to $\mu$ is a complete summary of $M$ as we can reconstruct $M$ from $\mathcal{GC}_\mu(M)$ up to isomorphism. Since the graphcode depends on the choice of barcode basis $\mu$, the graphcode is not an invariant of $M$. However, the graphcode representation allows us to infer algebraic properties of $M$ from combinatorial properties of the graph $\mathcal{GC}_\mu(M)$. Since every graph is the disjoint union of its connected components, Proposition \ref{prop:persmod_from_graphcode_union} and \ref{prop:persmod_reconstruction} imply that the connected components of $\mathcal{GC}_\mu(M)$ correspond to summands of $M$. We now show that the converse of Proposition \ref{prop:persmod_from_graphcode_union} and \ref{prop:persmod_from_path} is also true. This gives us a characterization of interval modules via graphcodes and will ultimately lead to a characterization of interval-decomposable modules discussed in Section \ref{sec:interval_decomposability}.

\begin{proposition} \label{prop:graphcode_from_sum}
Let $M_1$ and $M_2$ be persistence modules and $\mu_1$ and $\mu_2$ barcode bases, then $\mathcal{GC}_{\mu_1\oplus\mu_2}(M_1\oplus M_2)=\mathcal{GC}_{\mu_1}(M_1)\sqcup\mathcal{GC}_{\mu_2}(M_2)$.
\end{proposition}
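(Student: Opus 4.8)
The claim is essentially a naturality/functoriality statement: forming the graphcode commutes with direct sums, provided the barcode basis for the sum is chosen as the direct sum of the barcode bases of the summands. The plan is to unwind the definition of $\mathcal{GC}_\mu(M) = \mathcal{GC}(M_\mu)$ and check that, with $\mu = \mu_1 \oplus \mu_2$, the algebraic object $(M_1 \oplus M_2)_\mu$ is literally the "union" of the diagrams $(M_1)_{\mu_1}$ and $(M_2)_{\mu_2}$ in the sense that its $i$-th slice is $\big(\bigoplus_k I^i_k\big) \oplus \big(\bigoplus_l J^i_l\big)$ and its connecting matrix $\eta^i$ is block-diagonal with blocks the connecting matrices of the two summands. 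The combinatorial graphcode construction $\mathcal{GC}(-)$ then manifestly sends such a block-diagonal sequence of matrices to the disjoint union of the two graphs, which is exactly the assertion.

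In more detail, I would proceed as follows. First, recall that a barcode basis of $M_j$ is a tuple of isomorphisms $\mu_j = (\mu_{j,1}, \ldots, \mu_{j,n})$ with $\mu_{j,i} \colon (M_j)_{(\minus,i)} \xrightarrow{\cong} \bigoplus_k I^{j,i}_k$. I define $\mu_1 \oplus \mu_2$ to be the tuple whose $i$-th entry is $\mu_{1,i} \oplus \mu_{2,i} \colon (M_1 \oplus M_2)_{(\minus,i)} = (M_1)_{(\minus,i)} \oplus (M_2)_{(\minus,i)} \xrightarrow{\cong} \big(\bigoplus_k I^{1,i}_k\big) \oplus \big(\bigoplus_l I^{2,i}_l\big)$, which is again a barcode basis since a direct sum of isomorphisms is an isomorphism and the right-hand side is a direct sum of intervals. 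Second, I compute the connecting morphism of $(M_1 \oplus M_2)_\mu$ in degree $i$: it is $(\mu_{1,i+1} \oplus \mu_{2,i+1}) \circ (M_1^{(\minus,i+1)}_{(\minus,i)} \oplus M_2^{(\minus,i+1)}_{(\minus,i)}) \circ (\mu_{1,i} \oplus \mu_{2,i})^{-1}$, and since composition distributes over $\oplus$ this equals $\eta^{1,i} \oplus \eta^{2,i}$, where $\eta^{j,i}$ is the corresponding connecting morphism of $(M_j)_{\mu_j}$. Choosing the block-ordered basis (all $I^{1,i}_k$ first, then all $I^{2,i}_l$), the matrix of $\eta^{1,i} \oplus \eta^{2,i}$ is block-diagonal with diagonal blocks the matrices of $\eta^{1,i}$ and $\eta^{2,i}$ and zero off-diagonal blocks.

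Third, I apply the definition of $\mathcal{GC}(-)$ to this block-diagonal matrix sequence. The vertex set is $\bigcup_i \{I^{1,i}_k\}_k \cup \{I^{2,i}_l\}_l$, which is the disjoint union of the vertex sets of $\mathcal{GC}((M_1)_{\mu_1})$ and $\mathcal{GC}((M_2)_{\mu_2})$; the label $c$ restricts on each part to the label used there; and the edge set consists exactly of the pairs with $\eta^i_{wv} = 1$. Since the off-diagonal blocks vanish, no edge ever connects an $I^{1,\ast}$-vertex to an $I^{2,\ast}$-vertex, so $E = E_1 \sqcup E_2$ with $E_j$ the edge set of $\mathcal{GC}((M_j)_{\mu_j})$. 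Hence $\mathcal{GC}((M_1 \oplus M_2)_\mu) = \mathcal{GC}((M_1)_{\mu_1}) \sqcup \mathcal{GC}((M_2)_{\mu_2})$, which by the definition $\mathcal{GC}_\mu(M) = \mathcal{GC}(M_\mu)$ is precisely the claim.

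The only real subtlety — and the one step worth stating carefully rather than waving through — is the bookkeeping around permutations/orderings: the graphcode $\mathcal{GC}(M_\mu)$ as a labeled graph depends on an implicit ordering of the interval summands within each slice (used to index rows and columns of $\eta^i$), and I should either fix the convention that $\mu_1 \oplus \mu_2$ uses the concatenated ordering, or observe (as the text already does around Proposition~\ref{prop:persmod_reconstruction}) that reordering summands only permutes vertices and leaves the graph unchanged up to the canonical identification. Under either reading the equality holds on the nose; I expect this to be the main, though minor, obstacle, and everything else is a direct unwinding of definitions together with the elementary fact that $\mathrm{Hom}$ and matrix representation are additive in each argument.
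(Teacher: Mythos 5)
Your argument is correct and follows the same route as the paper's proof: both observe that $\mu_1 \oplus \mu_2$ is a barcode basis for $M_1 \oplus M_2$, that the connecting morphism between slices of $M_{\mu_1} \oplus M_{\mu_2}$ has no cross-terms between the two factors (you phrase this as the block-diagonal form of $\eta^i$, the paper as the absence of non-zero morphisms $I^i_k \to I^{i+1}_l$ across factors), and hence that the graphcode has no edges between the two vertex classes. The extra care you take about orderings of summands is a reasonable remark but, as you yourself note, immaterial to the claim.
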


\begin{proof}
Since $\mu_1$ and $\mu_2$ are barcode bases of $M_1$ and $M_2$, respectively, $\mu_1\oplus\mu_2$ is a barcode basis for $M_1\oplus M_2$. We obtain isomorphisms $\mu_1\colon M_1\xrightarrow{\cong}M_{\mu_1}$ , $\mu_2\colon M_2\xrightarrow{\cong}M_{\mu_2}$ and $\mu_1\oplus\mu_2\colon M_1\oplus M_2\xrightarrow{\cong} M_{\mu_1}\oplus M_{\mu_2}$. We can partition the vertices in $\mathcal{GC}(M_{\mu_1}\oplus M_{\mu_2})$ into vertices in $\mathcal{GC}(M_{\mu_1})$ and vertices in $\mathcal{GC}(M_{\mu_2})$. By construction as a direct sum, if there is a non-zero map $I^i_k\rightarrow I^{i\plus 1}_l$ in the persistence module of one-parameter persistence modules $M_{\mu_1}\oplus M_{\mu_2}$, then either both $I^i_k$ and $I^{i\plus 1}_{l}$ belong to $M_{\mu_1}$ or both belong to $M_{\mu_2}$. Hence, there are no edges between vertices in $\mathcal{GC}(M_{\mu_1})$ and vertices in $\mathcal{GC}(M_{\mu_2})$ and we have $\mathcal{GC}(M_{\mu_1}\oplus M_{\mu_2})=\mathcal{GC}(M_{\mu_1})\sqcup \mathcal{GC}(M_{\mu_2})$. By definition we obtain $\mathcal{GC}_{\mu_1\oplus\mu_2}(M_1\oplus M_2)=\mathcal{GC}_{\mu_1}(M_1)\sqcup\mathcal{GC}_{\mu_2}(M_2)$. 
\end{proof}

\begin{corollary} \label{cor:sum_to_union}
If $M$ is a persistence module such that $M\cong M_1\oplus M_2$, then there exist barcode bases $\mu_1$, $\mu_2$ and $\mu$ such that $\mathcal{GC}_\mu(M)=\mathcal{GC}_{\mu_1}(M_1)\sqcup \mathcal{GC}_{\mu_2}(M_2)$.
\end{corollary}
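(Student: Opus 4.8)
The plan is to deduce Corollary~\ref{cor:sum_to_union} directly from Proposition~\ref{prop:graphcode_from_sum}, using the fact that graphcodes are only defined up to the choice of barcode basis. Suppose $M \cong M_1 \oplus M_2$. First I would pick arbitrary barcode bases $\mu_1$ of $M_1$ and $\mu_2$ of $M_2$; such bases exist because each one-parameter slice of $M_i$ decomposes into intervals by Gabriel's theorem, as recalled in Section~\ref{sec:basic_notions}. As observed in the proof of Proposition~\ref{prop:graphcode_from_sum}, the pair $\mu_1 \oplus \mu_2$ (meaning the slicewise direct sum of the isomorphisms, matched with the fixed isomorphism $M \xrightarrow{\cong} M_1 \oplus M_2$) is then a barcode basis for $M_1 \oplus M_2$, hence transports to a barcode basis for $M$ along the isomorphism $M \cong M_1 \oplus M_2$.

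Concretely, I would set $\mu \coloneqq (\mu_1 \oplus \mu_2) \circ \varphi$, where $\varphi \colon M \xrightarrow{\cong} M_1 \oplus M_2$ is the given isomorphism. Since $\varphi$ is an isomorphism of persistence modules and $\mu_1 \oplus \mu_2$ sends each slice isomorphically onto a direct sum of intervals, the composite $\mu$ is again a barcode basis for $M$, and by construction $M_\mu = M_{\mu_1} \oplus M_{\mu_2}$ as slicewise persistence modules (the intervals appearing in each slice of $M_\mu$ are exactly those of $M_{\mu_1}$ together with those of $M_{\mu_2}$, and the connecting matrices $\eta^i$ are block-diagonal). Therefore $\mathcal{GC}_\mu(M) = \mathcal{GC}(M_\mu) = \mathcal{GC}(M_{\mu_1} \oplus M_{\mu_2})$, and Proposition~\ref{prop:graphcode_from_sum} — or more precisely the computation inside its proof that $\mathcal{GC}(M_{\mu_1} \oplus M_{\mu_2}) = \mathcal{GC}(M_{\mu_1}) \sqcup \mathcal{GC}(M_{\mu_2})$ — yields $\mathcal{GC}_\mu(M) = \mathcal{GC}_{\mu_1}(M_1) \sqcup \mathcal{GC}_{\mu_2}(M_2)$, as desired.

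I expect the only real subtlety to be bookkeeping about what ``barcode basis'' means for a module that is merely isomorphic to a direct sum rather than literally equal to one: one must check that precomposing a barcode basis with an isomorphism again yields a barcode basis, and that the graphcode is insensitive to this precomposition up to the permutation ambiguity already noted before Proposition~\ref{prop:persmod_reconstruction}. This is immediate from the definitions, so the proof is short; the main point is simply to phrase the reduction to Proposition~\ref{prop:graphcode_from_sum} cleanly. A one-line alternative would be to invoke Proposition~\ref{prop:graphcode_from_sum} with $\mu_1, \mu_2$ and then pull back along $\varphi$, but spelling out the construction of $\mu$ makes the existence claim in the corollary explicit.
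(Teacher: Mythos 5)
Your proof is correct and essentially identical to the paper's: both define $\mu \coloneqq (\mu_1\oplus\mu_2)\circ\varphi$ for the given isomorphism $\varphi\colon M\xrightarrow{\cong}M_1\oplus M_2$, observe that this is a barcode basis for $M$ with $M_\mu = M_{\mu_1}\oplus M_{\mu_2}$, and conclude via Proposition~\ref{prop:graphcode_from_sum}. The extra remarks about precomposition and permutation ambiguity are sound bookkeeping but do not change the argument.
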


\begin{proof}
Let $\nu\colon M\xrightarrow{\cong}M_1\oplus M_2$ be an isomorphism. Since $\mu_1$ and $\mu_2$ are barcode bases of $M_1$ and $M_2$, respectively, $\mu\coloneqq (\mu_1\oplus\mu_2)\circ\nu$ is a barcode basis for $M$. Thus, we have an isomorphism $\mu\colon M\xrightarrow{\cong} M_{\mu_1}\oplus M_{\mu_2}$ and $\mathcal{GC}_\mu(M)=\mathcal{GC}(M_{\mu_1}\oplus M_{\mu_2})=\mathcal{GC}_{\mu_1\oplus\mu_2}(M_1\oplus M_2)$. By Proposition \ref{prop:graphcode_from_sum}, $\mathcal{GC}_\mu(M)=\mathcal{GC}_{\mu_1}(M_1)\sqcup\mathcal{GC}_{\mu_2}(M_2)$.
\end{proof}

\noindent
If $M$ is an interval module, then, when viewed as a persistence module of one-parameter persistence modules, every slice $M_{(\minus,i)}$ is either zero or a single one-parameter interval (see Figure \ref{fig:graphcode_interval}). This is a direct consequence of the definition of an interval module as an indecomposable thin persistence module supported on a convex connected subset of $G(m,n)$. Since there is at most one non-zero morphism between two one-parameter interval modules, $\text{id}=(\text{id},\ldots,\text{id})$ is the only barcode basis for an interval module. 

\begin{proposition} \label{prop:graphcode_from_interval}
For an interval module $M$, $\mathcal{GC}_\text{id}(M)$ is a directed path.
\end{proposition}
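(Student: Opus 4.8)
The plan is to read the graphcode off directly from the two facts recorded just above the statement. Since $M$ is thin with convex connected support, each slice $M_{(\minus,i)}$ is either $0$ or a single one-parameter interval, so $\mathcal{GC}_{\text{id}}(M)$ has \emph{at most one vertex at each height} $i\in\{1,\dots,n\}$; and since $\text{id}$ is the only barcode basis of $M$, we have $\mathcal{GC}_{\text{id}}(M)=\mathcal{GC}(M)$, where the connecting matrix $\eta^i$ is just the $1\times 1$ (or empty) matrix of the morphism $M_{(\minus,i)}^{(\minus,i\plus 1)}$. By property~2 of a graphcode every edge joins vertices of consecutive heights, and with at most one vertex per height this already forces $\mathcal{GC}_{\text{id}}(M)$ to be a subgraph of a \emph{linear} configuration: writing $v_i$ for the unique vertex at a height $i$ that carries a non-zero slice, the only possible edges are $v_i\to v_{i\plus 1}$, and there are no loops or multi-edges. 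It then remains to show that the non-zero heights are consecutive and that every such edge is present; the lever for both is indecomposability of $M$, transported through the slicewise picture \eqref{eq:interval_module_module} by Propositions~\ref{prop:persmod_from_graphcode_union} and~\ref{prop:persmod_reconstruction}.

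First I would prove that the set $H=\{i : M_{(\minus,i)}\neq 0\}$ is an interval $\{a,a+1,\dots,b\}$, where $a=\min H$ and $b=\max H$ exist because $M\neq 0$. Suppose not, and pick $i$ with $a<i\le b$ and $M_{(\minus,i)}=0$. Then $M_{(\minus,i\minus 1)}^{(\minus,i)}$ is zero, since it maps into the zero module; equivalently $\eta^{i-1}=0$, so no edge of $\mathcal{GC}_{\text{id}}(M)$ can cross from a height $\le i-1$ to a height $\ge i$ (such an edge would join heights $i-1$ and $i$, but there is no vertex at height $i$). Hence $\mathcal{GC}_{\text{id}}(M)$ is the disjoint union of its parts on heights $\le i-1$ and on heights $\ge i$, and by Propositions~\ref{prop:persmod_from_graphcode_union} and~\ref{prop:persmod_reconstruction} this exhibits $M$ as a direct sum of two two-parameter modules, both non-zero because the first contains the slice at height $a\le i-1$ and the second the slice at height $b\ge i$. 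This contradicts indecomposability, so $H=\{a,\dots,b\}$ and the vertex set of $\mathcal{GC}_{\text{id}}(M)$ is exactly $\{v_a,v_{a\plus 1},\dots,v_b\}$.

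Next I would show that every edge $v_i\to v_{i\plus 1}$ with $a\le i<b$ occurs. If $v_i\to v_{i\plus 1}$ were missing then $\eta^i=0$, so exactly as before $\mathcal{GC}_{\text{id}}(M)$ splits as the disjoint union of its parts on heights $\le i$ and on heights $\ge i+1$, hence $M$ decomposes as a direct sum of two non-zero modules (one containing the slice at height $a$, the other the slice at height $b$), again contradicting indecomposability. Combined with the earlier observation that these are the only possible edges, we conclude that $\mathcal{GC}_{\text{id}}(M)$ is precisely the directed path $v_a\to v_{a\plus 1}\to\cdots\to v_b$.

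The only genuinely non-formal point is the step used twice, namely that \emph{a vanishing connecting morphism $\eta^i$ forces $M$ to decompose as a direct sum}; this is where the slicewise perspective earns its keep, since splitting the diagram \eqref{eq:interval_module_module} at an index where the arrow vanishes is trivial, and Propositions~\ref{prop:persmod_from_graphcode_union} and~\ref{prop:persmod_reconstruction} pull the resulting direct-sum decomposition back to $M$ itself. Everything else is bookkeeping forced by thinness (at most one vertex per height) and by property~2 of a graphcode (edges only between consecutive heights). An equivalent route would be to first use Propositions~\ref{prop:persmod_from_graphcode_union} and~\ref{prop:persmod_reconstruction} to see that a disconnected $\mathcal{GC}_{\text{id}}(M)$ would decompose $M$, so $\mathcal{GC}_{\text{id}}(M)$ is connected, and then observe that a connected graph with at most one vertex per height and edges only between consecutive heights must be a directed path.
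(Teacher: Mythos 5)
Your proof is correct and takes essentially the same approach as the paper: at most one vertex per height (thinness), edges only between consecutive heights (graphcode axiom), and connectedness forced by indecomposability via Propositions~\ref{prop:persmod_from_graphcode_union} and~\ref{prop:persmod_reconstruction}. In fact, the ``equivalent route'' you sketch in your closing sentence is verbatim the paper's argument; your main write-up just unpacks the connectedness claim into the two explicit cases (no gaps in the set of non-zero heights, and no missing edge between adjacent non-zero heights), which the paper leaves implicit.
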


\begin{proof}
By construction of $\mathcal{GC}_\text{id}(M)=\mathcal{GC}(M)$, the vertices at every height of $\mathcal{GC}(M)$ correspond to the intervals in $M_{(\minus,i)}$. Since $M$ is an interval module, there is at most one vertex at every height. Since $M$ is indecomposable all these vertices have to be connected. Otherwise Proposition \ref{prop:persmod_from_graphcode_union} and \ref{prop:persmod_reconstruction} would imply that $M$ is decomposable. Since there are only edges between vertices at consecutive height levels in a graphcode, $\mathcal{GC}(M)$ is a directed path.
\end{proof}

\noindent
\textbf{Compressed graphcodes.}
A disadvantage of graphcodes is that every bar of every barcode is represented by a vertex, leading to a high number of vertices and therefore a large graph.
However, in practical situations, the barcodes of consecutive slices often do not differ from each other, and the question is how to compress the information
of a graphcode without losing any information. The following definition prepares our approach by relaxing the condition that graphcodes only connect bars
on consecutive slices.

\begin{definition}[generalized Graphcode]
A directed graph $\mathcal{G}=(V,E,c)$ with vertex labels 
\begin{equation*}
c\colon V\rightarrow \{1,\ldots,m\}\times\{1,\ldots,m+1\}\times\{1,\ldots,n\}
\end{equation*}
is called a \emph{generalized graphcode} if:
\begin{enumerate}
\item for all $v\in V$ with $c(v)=(b,d,h)$, we have $b<d$
\item for all $v\in V$ and all $(v,w),(v,w')\in E$, we have $h(w)=h(w')$
\item for all $(v,w)\in E$ with $c(v)=(b_1,d_1,h_1)$ and $c(w)=(b_2,d_2,h_2)$, \\ we have $[b_2,d_2)\lhd [b_1,d_1)$.
\end{enumerate}
\end{definition}

\noindent
Note that graphcodes are generalized graphcodes where $h(w)=h(v)+1$ for all edges $(v,w)\in V$.
The condition that all out-neighbors
of a vertex are on the same height will be convenient in the next section where we generate a minimal presentation out of a generalized graphcode.

We call a vertex $w$ in a generalized graphcode $\mathcal{G}$ \emph{superfluous} if $w$ has exactly one incoming edge $(v,w)$, $v$ and $w$ have the same
birth and death values (i.e., with $c(v)=(b_1,d_1,h_1)$ and $c(w)=(b_2,d_2,h_2)$, we have $b_1=b_2$ and $d_1=d_2$), $w$ has at least one outgoing edge,
and $v$ has no further outgoing edge. In that case, writing $(w,x_1),\ldots,(w,x_s)$ for the outgoing edges of $w$, we define $\mathcal{G}'$ as the graph
obtained from $\mathcal{G}$
by removing $w$ and all incident edges, and adding the edges $(v,x_1),\ldots,(v,x_s)$ instead. It is simple to verify that $\mathcal{G}'$ is a generalized
graphcode as well. Moreover, the property of being superfluous does not change when eliminating other superfluous vertices in the graphcode.
We call a generalized graphcode $\mathcal{G}'$ a \emph{compression} of a graphcode $\mathcal{G}$ if $\mathcal{G}'$ is obtained from $\mathcal{G}$ by removing
superfluous vertices. We call $\mathcal{G}'$ \emph{fully compressed} if it does not contain any superfluous vertex.
Figure \ref{fig:compressed_graphcode} shows an example.

A generalized graphcode also induces a two-parameter persistence module: For a generalized graphcode $\mathcal{G}$, we define its
\emph{expansion} $\expand(\mathcal{G})$ as follows: if a vertex $v$ with $c(v)=(b,d,h)$ has all its outgoing neighbors $w_1,\ldots,w_s$ at height $h'$ with $h'-h\geq 2$,
remove all edges $(v,w_i)$, introduce vertices $v_{h\plus 1},\ldots,v_{h'\minus 1}$ with $c(v_{h\plus i}):=(b,d,h+i)$ and add edges
$(v,v_{h\plus 1}),(v_{h\plus 1},v_{h\plus 2}),\ldots,(v_{h'\minus 2},v_{h'\minus 1})$ as well as $(v_{h'\minus 1},w_1),\ldots,(v_{h'\minus 1},w_s)$.
Then $\expand(\mathcal{G})$ is a graphcode (in the sense of Definition~\ref{def:graphcode})
and we define $PM(\mathcal{G}):=PM\big(E(\mathcal{G})\big)$.

It is easy to observe that expansion is inverse to compression, in the sense
that if $\compress(H)$ denotes a compression of a graphcode $H$, then
$\expand(\compress(H))=H$. This implies that the persistence modules represented by a graphcode and its compression are equal.

Although the definition of superfluous vertices might appear quite restrictive, a graphcode has typically many such superfluous
vertices as we will demonstrate in Table~\ref{tbl:compressed_table}.
Since the compression process is obviously doable in linear time in the size of a graphcode, we can always assume to work with
fully compressed graphcodes. But even better, the next section will show that a compressed (although not fully compressed)
graphcode can be efficiently computed directly without the detour of computing an uncompressed graphcode first.

\begin{figure}
    \centering
    \includegraphics[width=0.6\linewidth,]{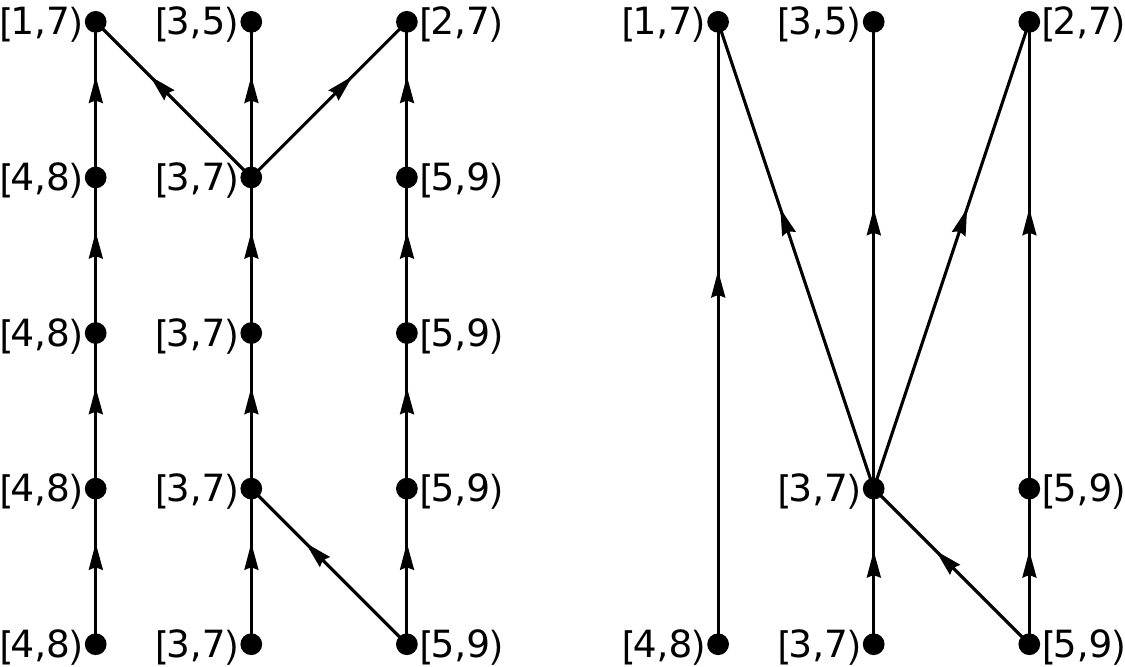}
    \caption{Graphcode (left), compressed graphcode (right).}
    \label{fig:compressed_graphcode}
\end{figure}

\section{From Presentations to Graphcodes and back} \label{sec:computation}

\subparagraph{Presentations.}
Presentations are a standard way to represent two-parameter persistence modules. We quickly recap their definition.
We denote by $\epm[y]$ the \emph{elementary projective module} at $y\in G(m,n)$ which is defined as the interval module with support $\{x\in G(m,n)\vert y\leq x\}$.
A \emph{projective module} is a direct sum of elementary projective modules.
For any persistence module $M$, there exist projective modules $Gen$ and $Rel$, which we call the generators and relations of $M$, and a morphism $p\colon Rel\rightarrow Gen$, which we call a (projective) \emph{presentation} of $M$, such that $M\cong\text{coker }p$.
If $Gen=\bigoplus_{i=1}^m \epm[x_i]$ and $Rel=\bigoplus_{j=1}^n \epm[y_j]$, we call $M$ \emph{finitely presented}, and we obtain $\text{Hom}(Rel,Gen)\cong \bigoplus_{i=1}^m\bigoplus_{j=1}^n \text{Hom}(\epm[y_j],\epm[x_i])$ where each morphism $\epm[y_j]\rightarrow\epm[x_i]$ is either $0$ or $1$.
Therefore, $p$ can be represented as a matrix with coefficients in $\mathbb{Z}_2$ where the columns and rows are labeled with the grades $y_j$ and $x_i$ of the generators and relations in $G(m,n)$.
A presentation of $M$ is \emph{minimal} if $M$ cannot be presented with fewer generators or relations.
The \textsc{mpfree} library efficiently computes a minimal presentation of the homology persistence module of a simplicial bifiltration\footnote{\url{https://bitbucket.org/mkerber/mpfree/src/master/}}; see also~\cite{bll-efficient,lw-computing}.

\subparagraph{The graphcode algorithm.}
Compressed graphcodes can be computed efficiently from a presentation.
The algorithm that we will outline is mostly analogous to the
one described 
in~\cite[App.B]{russold2024graphcode} where a chain complex
is assumed as input instead.
Specifically, we assume
the presentation $\pres$ to be given by a bi-graded matrix
that encodes the generators (as rows) and relations (as columns) of
the presentation.
The graphcode algorithm computes a graphcode $\mathcal{G}$ such
that the persistence modules induced by $\pres$ and $\mathcal{G}$ are isomorphic.

We will refer to the first and second coordinate of the bi-grades
as the \emph{scale} and the \emph{height} of a generator or relation, respectively.
We call a generator or relation \emph{active} at height $i$ if its own height is at most $i$.
We start by reviewing the standard persistence algorithm to compute the barcode
of $M_{(\minus ,i)}$. For that, define the matrix $X_i$ consisting of generators and relations that are active at height $i$,
sorted according to their scale.
The \emph{pivot} of a non-zero column is the index of the lowest non-zero row.
We traverse the matrix from the left to the right and add previously-encountered
columns if their pivot equals the pivot of the current column, until the pivot
of the current column is new or the column is reduced to zero.
When the process finishes, the pivot entries reveal the barcode of $M_{(\minus ,i)}$:
if the pivot of column $j$ is at row index $i$, row $i$ has scale $b$,
and column $j$ has scale $d$, then $[b,d)$ is a bar in the barcode.
The case $b=d$ is possible (and occurs frequently in practical examples), in which case
the corresponding interval module is the zero module and can be skipped from the barcode.
We call a column that yields a zero module a \emph{zero-persistence column}.
  
We point out that the reduced matrix does not only yield the interval decomposition
$\bigoplus I_j$ that is isomorphic to $M_{(\minus ,i)}$, but it also encodes a barcode basis,
that is, an isomorphism between the two objects. Writing $g_1,\ldots,g_s$ for the generators
of $M_{(\minus,i)}$, 
every non-zero column of the reduced matrix is a linear combination
of the $g_i$'s which comes into existence at $b$, the scale of the pivot index,
and becomes trivial at $d$, the scale of the column index. Clearly, the columns
are linearly independent, and by adding suitable columns
for bars to infinity, we can complete these columns to a basis of the generators
where each basis element represents a bar. The base change from $g_1,\ldots,g_s$ to this basis
induces an isomorphism of persistence modules which
is the barcode basis in the notation of the previous section. Note that other reduction strategies
with left-to-right column additions yield the same pivot entries but possibly a different
reduced matrix and, consequently, a different barcode basis, re-stating the fact that the barcode basis
is not unique. 

Instead of performing the above procedure for every $M_{(\minus,i)}$ separately,
barcode bases for $M_{(\minus,1)},\ldots,M_{(\minus,n)}$
can be computed with one single out-of-order reduction of the matrix
for $M_{(\minus,n)}$. The idea is to group the generators and relations
of same height $i$ into $n$ ``batches'' and build the columns of $M_{(\minus,n)}$
batch-by-batch. The columns of a batch are inserted at their
column index in the final matrix, that is, potentially leaving gaps of empty
columns in the matrix that are filled by later batches. After inserting
the batch $i$, the algorithm reduces the resulting partially-filled matrix~--
this requires column operations to reduce the newly inserted columns, but potentially also
operations on old columns that can be further reduced due to the new ones.
However, it is simple to detect those columns that require an update during
the algorithm, and the accumulated cost for
all these partial reductions is not larger than if $M_{(\minus,n)}$ would be
reduced from scratch. Once the reduction of batch $n$ is finished, the matrix
encodes a barcode basis for $M_{(\minus,n)}$, and the complexity is cubic
in the input size, as for standard matrix reduction.
Note that this procedure computes the vertices of the (uncompressed) graphcode;
however we ignore zero-persistence columns by not putting a graphcode vertex for them.

With some book-keeping, the aforementioned batch-reduction
also yields the edges of the graphcode:
a column $b$ of the matrix gets
reduced in batch $i+1$ by adding other columns to it, so we obtain a linear relation of the form
\[b_{\textrm{new}}=b_{\textrm{old}}+c_1+\ldots+c_r\]
where $b_{\textrm{old}}$ is the state of column $b$ before and $b_{\textrm{new}}$ the state of $b$ after the reduction,
and $c_1,\ldots,c_r$ are columns on the left of $b$ in the current matrix (note that $r=0$ is possible
if column $b$ is not modified at batch $i+1$). Because we reduce the columns from left to right in every batch,
$c_1,\ldots,c_r$ are reduced columns and thus elements of the barcode basis of $M_{(\minus,i\plus 1)}$ and so is
$b_{\textrm{new}}$. The column $b_{\textrm{old}}$ belongs to the barcode basis of $M_{(\minus,i)}$. By rearranging to
\[b_{\textrm{old}}=b_{\textrm{new}}+c_1+\ldots+c_r,\]
we therefore express the old basis element as linear combination of the new basis elements, and in that way
determine the image of $b_{\textrm{old}}$ under the morphism $M_{(\minus,i)}\to M_{(\minus,i\plus 1)}$.
To represent that image in the graphcode, the idea is to connect the vertex of $b_{\textrm{old}}$ (at height $i$)
to the vertex of every column on the right-hand-side of the equation (at height $i+1$) with some technical conditions:
first of all, an edge is only drawn if the two columns are not zero-persistence, as otherwise, there is no graphcode
vertex for them. Moreover, with $[b,d)$ the bar of $b_{\textrm{old}}$ and $[b',d')$ the bar of $c_i$,
we draw an edge only if $[b',d')\cap [b,d)\neq\emptyset$, or equivalently, $[b',d')\lhd [b,d)$.
Indeed, otherwise, we must have $b'<d'\leq b<d$, and as discussed earlier,
there is no morphism from the interval module given by $c_i$ to the interval module given by $b_{\textrm{old}}$.
It is easy to see that computing these edges does not increase the complexity. 

The above algorithm computes a graphcode in uncompressed form. A slight adaption yields
a compressed graphcode as well:
Call a column $b$ \emph{touched} in the reduction of batch $i+1$ if $b$ is added in batch $i+1$, another column is added
to it or if $b$ is added to some other column to the right. Then, we create a graphcode vertex for $b$ at height $i+1$
only if the column was touched at height $i+1$ or $i+2$:
Observe that if a column $b$ of the matrix is not touched in batch $i+1$
we get the relation $b_{\textrm{old}}=b_{\textrm{new}}$, hence $b_{\textrm{old}}$ has only one outgoing edge.
Also, $b_{\textrm{old}}$ and $b_{\textrm{new}}$ clearly have the same birth and death value.
Furthermore, $b$ is not added to any column on the right in batch $i+1$, so $b_{\textrm{new}}$
has no further incoming edge. If $b$ is also not touched in batch $i+2$,
then $b_{\textrm{new}}$ also has one outgoing edge, and the vertex $b_{\textrm{new}}$ is superfluous
in the sense of the previous section.
This implies that ignoring such untouched columns indeed yields a compressed graphcode.

\begin{theorem}
  The graphcode algorithm computes a compressed graphcode in $O(n^3)$ time,
  and the resulting graphcode is of size $O(n^2)$.
\end{theorem}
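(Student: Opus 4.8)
The plan is to take correctness of the output — that the algorithm returns a generalized graphcode in compressed form whose induced module is isomorphic to the module presented by $\pres$ — as already established by the discussion preceding the statement, and to concentrate on the two quantitative claims.

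For the running time, I would argue that the cost is dominated by the single out-of-order reduction of the matrix of $M_{(\minus,n)}$, and that this reduction performs $O(n^2)$ column additions at $O(n)$ cost each. The key monovariant is that, over the whole run, the pivot of any fixed column only ever decreases: inserting the columns of a new batch does not change the pivots of older columns, and a column is modified only when a reduced column to its left with the same pivot is added to it, which strictly lowers its pivot (or zeroes the column). Hence each column receives at most $O(n)$ additions, for $O(n^2)$ additions and $O(n^3)$ time in total; the book-keeping needed to detect which older columns must be revisited after a batch insertion is a standard pivot-to-column table and costs no more. All remaining steps — testing whether a column is \emph{touched}, deciding whether to emit a vertex, reading off the relation $b_{\textrm{old}}=b_{\textrm{new}}+c_1+\ldots+c_r$ from each (partial) reduction and checking the bar-intersection condition $[b',d')\cap[b,d)\neq\emptyset$ for each of its terms — cost $O(1)$ per term or per touch event, of which there are $O(n^2)$, and so are absorbed into $O(n^3)$.

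For the size bound I would bound vertices and edges separately. A vertex is emitted for a column $b$ at height $h$ only when $b$ is touched at $h$ or at $h+1$, where ``touched'' means $b$ is inserted, has a column added to it, or is added to a column on its right; there are $O(n)$ insertions and the other two kinds of events are in bijection with column additions, of which there are $O(n^2)$, so there are $O(n^2)$ touch events, each responsible for at most two vertices, giving $|V|=O(n^2)$. Every emitted edge corresponds to a term on the right-hand side of some relation $b_{\textrm{old}}=b_{\textrm{new}}+c_1+\ldots+c_r$: the terms $c_i$ are in bijection with column additions ($O(n^2)$ of them), and each surviving $b_{\textrm{new}}$-term links two consecutive vertices of the same column, of which there are at most $|V|=O(n^2)$. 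Hence $|E|=O(n^2)$ and the compressed graphcode has size $O(n^2)$.

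The step needing the most care is pinning down the pivot monovariant across the interleaved batches together with the claim that detecting, after each batch, exactly those older columns whose pivot now collides with a newer one — and re-reducing them — fits within the $O(n^2)$ column-addition budget rather than spawning fresh work; once this is nailed down, the vertex and edge counts follow directly from the charging arguments above.
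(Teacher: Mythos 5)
Your proposal is correct and follows essentially the same approach as the paper: both rest on the observation that the batch reduction performs only $O(n^2)$ column additions (whence $O(n^3)$ time), and both obtain the size bound by charging each edge either to a column addition (when the edge connects bars from different columns) or to a vertex (when it connects the same column across consecutive heights). The only difference is that the paper states the $O(n^2)$ column-addition count as a known fact with a pointer to the earlier graphcode paper, whereas you justify it in place via the pivot-monovariant argument; this is a welcome elaboration rather than a different route, and it is sound for the reason you give — new batches introduce fresh generator rows that do not appear in older columns, so an old column's pivot generator is unchanged by insertion and can only be eliminated (strictly raising the pivot) by an addition from the left, giving at most $n$ additions per column.
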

\begin{proof}
The running time is a direct consequence of the observation that the total number
of column operations is quadratic.
All further steps in the algorithm are minor overhead ~-- 
see also \cite{russold2024graphcode}. The size bound, which also holds for the uncompressed graphcode, follows from a simple charging argument:
every edge in the graphcode is either an edge connecting two bars
represented by the same column in consecutive levels; then charge such an edge
to the starting vertex which incurs a cost of $O(n^2)$ in total,
as the number of vertices is quadratic. Or, the edge connects two different
columns in consecutive levels, say column $c_1$ at level $i$ and column $c_2$ at
level $i+1$; but then, the column addition $c_1\gets c_1+c_2$ happened
at iteration $i$, and we can charge the edge to that column
addition. Every column addition is charged only once,
and the algorithm does only
perform $O(n^2)$ such column additions.
\end{proof}

\begin{figure}
    \centering
    \includegraphics[width=0.95\linewidth,trim={0.1cm 0.1cm 0.1cm 0.1cm},clip]{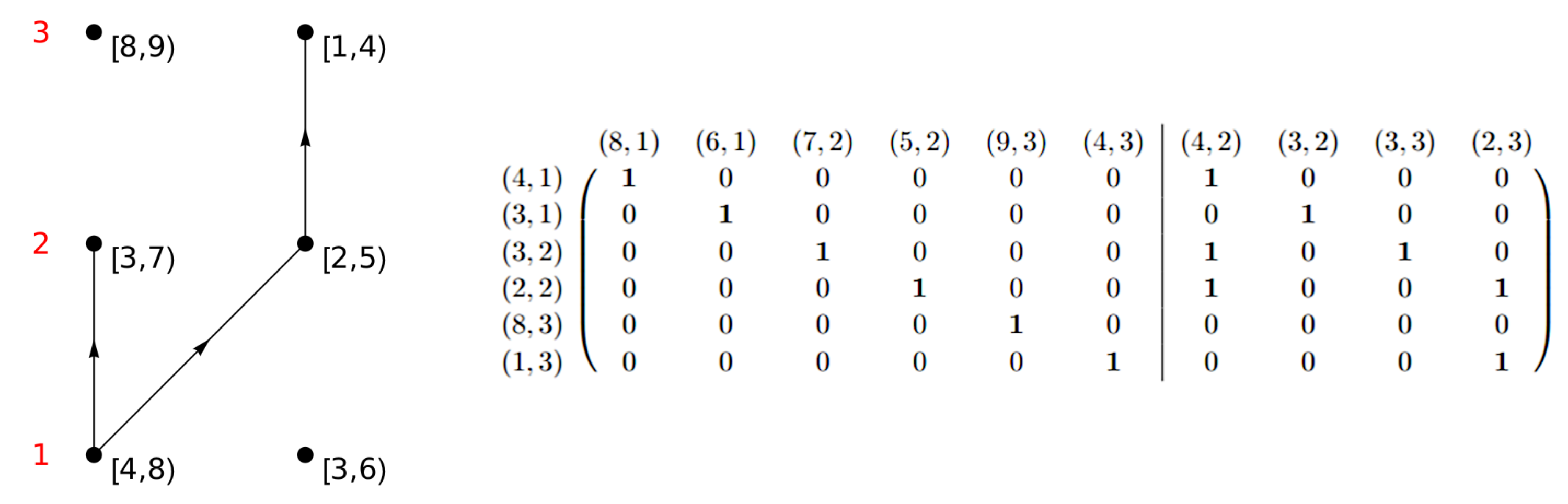}
    \caption{A presentation matrix (right) constructed from a graphcode (left).}
    \label{fig:graphcode_presentation}
\end{figure}

\subparagraph{Presentations from graphcodes.}
A graphcode
almost immediately gives rise to a presentation
that represents an isomorphic module. We describe the construction
directly for the generalized version of graphcodes. We refer to Figure~\ref{fig:graphcode_presentation} for an example.
We let $\mathcal{G}$ denote the generalized graphcode
and $\pres$ the resulting presentation.

Recall that a presentation consists of generators and relations that are both
bi-graded. We define one generator for $\pres$ at every interval contained in $\mathcal{G}$.
Precisely, for every interval $I=(b,d,h)$ of the graphcode with birth value
$b$, death value $d$ and height $h$ we place a generator $g_I$
with bi-grade $(b,h)$. Moreover, if $d\neq\infty$, we put the relation $g_I=0$
with bi-grade $(d,h)$ (capturing the fact
that $I$ is a persistence interval at height $h$).

For every generator of $\mathcal{G}$ that is not on the maximal height $n$, we add another
relation that depends on its outgoing edges in the graphcode.
Recall that in the generalized graphcode, all out-neighbors of a vertex
have the same height. For the interval $I=(b,d,h)$, let $h'$ denote this height
(if there is no outgoing edge, set $h':=h+1$). Let $J_1,\ldots,J_s$ denote
the out-neighbors of $I$ and let $g_1,\ldots,g_s$ denote the corresponding
generators. Then we put a relation $g_I=g_1+\ldots+g_s$ at bi-grade $(b,h')$.
This is well-defined because the birth value of every $J_i$ is at most $b$
by definition of the graphcode.
Informally, this relation captures the fact that the
homology class of $I$, when embedded in height $h'$, equals the sum of the homology classes of the $J_i$. 
Figure \ref{fig:graphcode_presentation} shows an example of this construction.

This construction is the inverse to the graphcode algorithm in the following sense. Let $\mathcal{G}$ be a
graphcode and let $\pres$ denote the presentation generated out of it. Then, let $\mathcal{G'}$
denote the graphcode obtained from $\pres$ by applying the graphcode algorithm.
Then $\mathcal{G}=\mathcal{G'}$; the reason is that every relation of the form $g_I=g_1+\ldots+g_s$ causes
a cascade of column operations on the column that represents $g_I$, using precisely the columns
that represent $g_1,\ldots,g_s$, resulting in exactly the right edges in the graphcode.
Hence, $\mathcal{G}$ and $\pres$ represent isomorphic persistence modules because $\pres$ and $\mathcal{G'}$ do.

It is obvious that our construction generates $\pres$ in linear time in the size of $\mathcal{G}$. However, $\pres$ is in general not minimal~-- if a minimal presentation
is required, the complexity increases to $O(n^3)$ as this is the complexity
to minimize a presentation~\cite[Thm 4.4]{fkr-compression}.
We point out that fast software for this minimization task is available
in the \textsc{mpfree} library\footnote{\url{https://bitbucket.org/mkerber/mpfree/src/master/}}; see also~\cite{bll-efficient,lw-computing}.

We remark that even if the (generalized) graphcode $\mathcal{G}$ is fully compressed, the presentation $\pres$ returned by the above procedure is in general not minimal, as we have
verified experimentally. Therefore, we cannot skip the
minimization step. Still, the question remains whether a
faster minimization procedure exists for presentations
arising from a totally compressed graphcode.

\section{Experimental evaluation}
\label{sec:experiments}
We implemented our algorithm to compute compressed graphcodes and
made it publicly
available at \url{https://bitbucket.org/mkerber/graphcode}. In its simplest form,
it takes an input chain complex or presentation in \textsc{scc2020} format\footnote{\url{https://bitbucket.org/mkerber/chain_complex_format/src/master/}}
and computes a \textsc{boost::adjacency\_list}, a standard
data structure to represent graphs.

We tested our algorithm on a large database of around 1300 minimal presentations
which is also publicly available~\cite{benchmark_repo}:
it contains minimal presentations of the instances described in~\cite{fkr-compression},
scaled up the instances described in~\cite{akll-delaunay},
and includes additional algebraic datasets described in~\cite{aida}.
This collection contains many standard constructions 
such as density-Cech bifiltrations with various choices of density,
density-Rips bifiltrations, multi-cover bifiltrations and bifiltrations
on triangular meshes. Typically, every instance appears in different sizes
and is generated five times in case the instance depends on a random process,
i.e., sampling points from a shape.

In this section, we will only report on representative outcomes for brevity.
The full log file with all instances is currently available at\\
\href{https://www.dropbox.com/scl/fo/zc5p3tbsiarnibtrnr5we/AECFnjXGYc7uRusprzeVZ24?rlkey=e60q4n27d4cvo7gj0xoewn4b9&st=22xsfwjv&dl=0}{https://www.dropbox.com/scl/fo/zc5p3tbsiarnibtrnr5we/AECFnjXGYc7uRusprzeVZ24}\\
\href{https://www.dropbox.com/scl/fo/zc5p3tbsiarnibtrnr5we/AECFnjXGYc7uRusprzeVZ24?rlkey=e60q4n27d4cvo7gj0xoewn4b9&st=22xsfwjv&dl=0}{?rlkey=e60q4n27d4cvo7gj0xoewn4b9\&st=22xsfwjv\&dl=0}. We will archive these files on a dedicated repository server upon acceptance of this article. All experiments were performed on a workstation with an Intel(R) Xeon(R)
CPU E5-1650 v3 CPU (6 cores, 12 threads, 3.5GHz) and 64 GB RAM,
running GNU/Linux (Ubuntu 20.04.2) and gcc 9.4.0.

\subparagraph{Graphcode performance.}
\begin{table}[h]
    \begin{tabular}{c|cc|ccc|cc|c}
      & & & \multicolumn{3}{c|}{Compressed} & \multicolumn{2}{c|}{Uncompressed}\\
      Type & \#Gens & \#Rels  & Vertices & Edges & time & Vertices & time & phat \\
      \hline
      \multirow{4}{*}{Density Cech}
      &   6356 &   7369 &  14845 & 10527 & 0.05 &   9.99M &   6.6  & 0.04\\
      &  12888 &  14994 &  30254 & 21636 & 0.11 &   40.6M &  27.12 & 0.08\\
      &  25973 &  30192 &  61356 & 44179 & 0.22 &    165M & 124.5  & 0.16\\
      &  52282 &  60917 & 124012 & 89874 & 0.46 & ?       & >900   & 0.33\\
      \hline
      \multirow{4}{*}{Function-Rips}
      &  14538 &  10673 &   36959 &  32901 & 0.14 & 72.8M & 50.02 & 0.07\\
      &  29206 &  21464 &   78334 &  74102 & 0.33 & ?     & >900  & 0.13\\
      &  58389 &  42896 &  166395 & 167283 & 0.58 & ?     & >900  & 0.24\\
      & 117504 &  86318 &  360671 & 388076 & 1.18 & ?     & >900  & 0.58\\
      \hline
      \multirow{4}{*}{interval}
      &  1000 &   901 &    7591 &   10391 & 0.02 &  1.24M &  0.76 & 0.01\\
      &  2000 &  1799 &   15217 &   20846 & 0.03 &  4.95M &  3.16 & 0.02\\
      &  4000 &  3609 &   30755 &   42307 & 0.06 & 19.8M  & 12.82 & 0.04\\
      &  8000 &  7186 &   61502 &   84407 & 0.13 & ?      & >900  & 0.06\\
    \end{tabular}
    \caption{Statistics for the size and computation time of graphcodes. All timings
      are in seconds, and the numbers are averaged over 5 instances of comparable
      size. 'M' stands for millions. The number of edges in the uncompressed case was always
      almost equal to the number of vertices in the uncompressed case, so it is not listed here.
    The rightmost column shows the time of phat for computing the barcode of $M_{(\minus,n)}$.}
    \label{tbl:compressed_table}
\end{table}
The advantage of compression is clearly visible in Table~\ref{tbl:compressed_table}:
the size and computation time of compressed graphcodes scales linear in most
test cases with the size of the input presentation, whereas the uncompressed
variant clearly shows a quadratic behavior~-- we remark that on those instances
where the timeout of $900$ seconds is reached, the program has occupied
the full $64$ GB of memory because of the immense size of the graph,
and the unproportional increase of runtime is due to memory swapping.
We point out that the original graphcode paper~\cite{russold2024graphcode} resolved
the issue of the large output graph by discretizing in one parameter
and only considering a small number of slices of the graphcode.
Our experiment
shows that such an approximation is not necessary with compression.

We also computed the barcode of $M_{(\minus,n)}$ using the phat library for comparison.
As visible in the last column, the running time for uncompressed graphcodes
is comparable to phat. This shows that indeed, the computation of compressed
graphcodes is almost as efficient as a single barcode computation of the
input presentation, forgetting the second component of the bi-grade.


\ignore{
\begin{table}[h]
    \begin{tabular}{c|cc|ccc|ccc|c}
      & & & \multicolumn{3}{c|}{Compressed} & \multicolumn{3}{c|}{Uncompressed}\\
      Type & \#Generators & \#Relations  & Vertices & Edges & time & Vertices & Edges & time & phat \\
      \hline
      \multirow{4}{*}{Density Cech}
      &   6356 &   7369 &  14845 & 10527 & 0.05 &   9994988 &   9990670 &   6.6  & 0.04\\
      &  12888 &  14994 &  30254 & 21636 & 0.11 &  40586552 &  40577933 &  27.12 & 0.08\\
      &  25973 &  30192 &  61356 & 44179 & 0.22 & 164636200 & 164619023 & 124.5  & 0.16\\
      &  52282 &  60917 & 124012 & 89874 & 0.46 & ?         &           & >900   & 0.33\\
      \hline
      \multirow{4}{*}{Function-Rips}
      &  14538 &  10673 &   36959 &  32901 & 0.14 & 72773103 & 72769044 & 50.02 & 0.07\\
      &  29206 &  21464 &   78334 &  74102 & 0.33 & ?        & ?        & >900  & 0.13\\
      &  58389 &  42896 &  166395 & 167283 & 0.58 & ?        & ?        & >900  & 0.24\\
      & 117504 &  86318 &  360671 & 388076 & 1.18 & ?        & ?        & >900  & 0.58\\
      \hline
      \multirow{4}{*}{interval}
      &  1000 &   901 &    7591 &   10391 & 0.02 &  1236979 &  1239779 &  0.76 & 0.01\\
      &  2000 &  1799 &   15217 &   20846 & 0.03 &  4945939 &  4951568 &  3.16 & 0.02\\
      &  4000 &  3609 &   30755 &   42307 & 0.06 & 19761709 & 19773260 & 12.82 & 0.04\\
      &  8000 &  7186 &   61502 &   84407 & 0.13 & ?        & ?        & >900  & 0.06\\
    \end{tabular}
    \caption{Statistics for the size and computation time of graphcodes. All timings
      are in seconds, and the numbers are averaged over 5 instances of comparable
      size. 'M' stands for millions.}
\end{table}
}

\subparagraph{Indecomposable decompositions.}
We demonstrate that the decomposition via graphcodes helps to speed up computational tasks in multi-parameter persistence.
A natural application scenario is to decompose a module into its indecomposable summands;
we refer to that as \emph{indecomposable decomposition}.
Given some algorithm $A$ for that task that takes a presentation of the module as input,
we can compute the graphcode,  compute its connected components and
generate a minimal presentation out of every connected component with the method described in Section~\ref{sec:computation}.
Then we call $A$ on each summand. Naturally, this pre-processing with graphcodes will save time only
if the partial decomposition of the graphcode consists
of many summands, and if they are detected faster than with $A$ directly.

We tested this idea in combination with the very recent and highly-optimized \emph{aida}
decomposition algorithm from~\cite{aida}. Note that aida requires a minimal presentation as input, so we have to
minimize the presentation obtained for every connected component which typically inflicts more
costs than computing the graphcode, and (non-minimal) presentations of its components.

\begin{table}[h]
    \begin{tabular}{c|ccc|ccc|cc}
      & & & & \multicolumn{3}{c|}{Graphcode} & \multicolumn{2}{c}{Aida time}\\
      Type & \#Gens & \#Rels & \#indec  & \#comp & time & pres & on gc & on input \\
      \hline
      \multirow{4}{*}{$\begin{array}{c}\text{Multi-cover}\\ \text{hom-dim 0}\end{array}$} 
      &   4533 &   9055 &  4284 &  4176 & 0.06 & 83\% & 0.23 & 0.74\\
      &   8953 &  17895 &  8564 &  8368 & 0.12 & 42\% & 0.29 & 1.82\\
      &  18239 &  36467 & 17630 & 17351 & 0.27 & 44\% & 0.38 & 4.65\\
      &  38047 &  76083 & 36636 & 36007 & 0.76 & 46\% & 1.02 & 25.84\\
      \hline
      \multirow{4}{*}{$\begin{array}{c}\text{density-alpha}\\\text{points on $S^2$}\\\text{height function}\\\text{hom-dim 1} \end{array}$}
      &  14852 &  20302 &  11894 &  10761 & 0.29 & 60\% &  0.19 & 0.84\\
      &  32037 &  42834 &  25543 &  23169 & 0.72 & 50\% &  1.59 & 5.45\\
      &  68825 &  94486 &  54410 &  49304 & 2.40 & 54\% &  14.5 & 147\\
      & 144662 & 198627 & 113468 & 102672 & 5.23 & 52\% &  128  & >900\\
      \hline
      \multirow{4}{*}{$\begin{array}{c}\text{density-alpha}\\\text{points on $S^2$}\\\text{height function}\\\text{hom-dim 0} \end{array}$}
      &  5000 &  7451 &  4306 &  1586 & 0.28 & 85\% & 0.10 & 0.06\\
      & 10000 & 14848 &  8617 &  3253 & 0.73 & 86\% & 0.38 & 0.22\\
      & 20000 & 29609 & 17026 &  6148 & 2.83 & 91\% & 2.36 & 2.02\\
      & 40000 & 59050 & 34111 & 12402 & 10.1 & 94\% & 9.34 & 3.69
    \end{tabular}
    \caption{From left to right: Type of instance, number of generators/relations,
      number of indecomposables, number of components in graphcode, time to compute the graphcode and the minimal presentation
      of its components, percentage of running time spent on computing the minimal presentations, running time of aida for
      decomposing the graphcode output, time of aida on the input presentation. Running
      times are in seconds, and results are averages over $5$ instances of similar size.}
    \label{tbl:table_aida}
\end{table}

The results in Table~\ref{tbl:table_aida} show that in some instances, pre-processing the module
with graphcodes yields a speed-up for the indecomposable decomposition. This is, however, not the typical
case in our experiments: in many cases, the running of aida with or without graphcodes
is comparable and so, the pipeline of graphcodes and aida is slower than applying aida directly.
In extreme case, as the one shown at the bottom third of Table~\ref{tbl:table_aida}, the graphcode
part takes a long time, which is dominated by creating the presentations of the summands. 
Perhaps counterintuitively, aida runs significantly slower on the decomposed module.
We speculate that the reason is that the minimal presentation created by graphcodes, while
isomorphic to the input one, has a different structure which seems to affect
the performance of aida.
Also remarkable is that the two diametrically opposite cases presented in the middle and bottom part
of Table~\ref{tbl:table_aida} are coming from the same simplicial bifiltration; only the
homology dimension differs.
Nevertheless, the experiment shows that graphcodes can play a role in this important
algorithmic problem of multi-parameter persistence.

\subparagraph{Further experiments.}
We briefly report on further experiments that we have conducted:
we compared the file sizes of the input presentation and the resulting graphcode
(stored in a simple ascii format). The outcome is that the ratio of the graphcode size
and the presentation size is never larger than 15, and typically is less than 4.
Remarkably, there are also numerous instances where the size of the graphcode
is smaller (again, by a small constant factor). This shows that graphcodes
are indeed a reasonable alternative to represent a persistence module.

\ignore{
We investigated solving further computational tasks with graphcodes: we repeated the
experiment explained in~\cite{aida} on computing barcode templates, as used for instance
in Rivet~\cite{lw-computing} or for the computation of the exact matching distance~\cite{klo-exact,bk-asymptotic}.
Indeed, the decomposition with graphcodes can be used to reduce the number of lines required in the
constructed arrangement. Since the decomposition is only partial, the arrangement produced by graphcodes
is always at least as large as that produced by aida. We observed that in practice, while the number
of summands is typically not too different, the graphcode arrangement typically is significantly
larger, often by a factor of 10 or more. Since the size of the arrangement is quadratic in the number of lines,
the size difference incurs a much higher cost in computation, which dominates the saving we obtain
by the fact that graphcodes are usually faster to compute than indecomposable decompositions. We conclude
that without further optimizations, aida is the better choice for this problem.
}

We also pursued the idea of computing minimal presentations itself using graphcodes. Note that
our algorithm also works with taking the $k$- and $(k+1)$-dimensional simplices
of a simplicial complex as input with minor modifications.
From the graphcode, we can compute a minimal presentation using mpfree,
as described in Section~\ref{sec:computation}.
Compared to using mpfree directly, the idea is that the graphcode construction
already filters out certain simplices and simplex pairs irrelevant for the final presentation,
and it is not immediately clear how the two approaches compare.

The experiments show that, although the graphcode approach was sometimes a bit faster, the speed-up is never more than a factor of $2$.
In many cases, graphcodes are also much slower than the direct mpfree approach. Perhaps interesting
is that the graphcode presentation seems sometimes better suited for subsequent computational tasks.
For instance, we found an example where aida needed $11$ seconds for decomposing the resulting graphcode presentation,
but $160$ seconds for the mpfree presentation.
This shows once more that the graphcode view on persistence modules 
is an alternative that deserves further consideration.

\section{Deciding interval-decomposability}
\label{sec:interval_decomposability}

In this section we introduce a new criterion for interval-decomposability based on graphcodes which leads to an efficient algorithm deciding if a module is interval-decomposable. A persistence module $M\colon G(m,n)\rightarrow \mathbf{Vec}$ is interval-decomposable if it is isomorphic to a sum of interval modules $M\cong \bigoplus_{i=1}^r J_i$. By Corollary \ref{cor:sum_to_union} and Proposition \ref{prop:graphcode_from_interval} in Section \ref{sec:graphcodes}, there exists a barcode basis $\mu$ such that $\mathcal{GC}_\mu(M)$ is a disjoint union of directed paths. On the other hand, if there exists a barcode basis $\mu$ such that $\mathcal{GC}_\mu(M)$ is a disjoint union of directed paths, then, by Proposition \ref{prop:persmod_from_graphcode_union}, \ref{prop:persmod_from_path} and \ref{prop:persmod_reconstruction}, $M$ is interval-decomposable. These observations lead to the following theorem characterizing interval-decomposability.

\begin{theorem} \label{thm:interval_criterion}
A persistence module $M$ \ignore{\colon G(m,n)\rightarrow\mathbf{Vec}}is interval-decomposable if and only if there exists a barcode basis $\mu$ such that $\mathcal{GC}_\mu(M)$ is a disjoint union of directed paths. 
\end{theorem}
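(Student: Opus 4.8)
The plan is to prove both implications by directly assembling the structural results of Section~\ref{sec:graphcodes}, iterating two of them; no new machinery is required.

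For the ``only if'' direction, assume $M\cong\bigoplus_{i=1}^r J_i$ with each $J_i$ an interval module. As recalled just before Proposition~\ref{prop:graphcode_from_interval}, an interval module has $\mathrm{id}$ as its unique barcode basis, and Proposition~\ref{prop:graphcode_from_interval} then gives that $\mathcal{GC}_{\mathrm{id}}(J_i)$ is a directed path for every $i$. Applying Corollary~\ref{cor:sum_to_union} inductively on $r$ --- writing $M\cong J_1\oplus(J_2\oplus\cdots\oplus J_r)$ at each step, with the first summand forced to use the barcode basis $\mathrm{id}$ and the second using the barcode basis supplied by the induction hypothesis --- yields a barcode basis $\mu$ of $M$ with $\mathcal{GC}_\mu(M)=\bigsqcup_{i=1}^r\mathcal{GC}_{\mathrm{id}}(J_i)$, which is a disjoint union of directed paths. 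Here I use the reading of Corollary~\ref{cor:sum_to_union}, evident from its proof, that the barcode bases of the two summands may be prescribed in advance.

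For the ``if'' direction, assume $\mu$ is a barcode basis of $M$ such that $\mathcal{GC}_\mu(M)$ is a disjoint union of directed paths $P_1,\ldots,P_r$, where isolated vertices are counted as trivial one-vertex paths. Each $P_i$ is itself a graphcode, and since $\mathcal{GC}_\mu(M)=P_1\sqcup\cdots\sqcup P_r$, iterating Proposition~\ref{prop:persmod_from_graphcode_union} gives $PM(\mathcal{GC}_\mu(M))=\bigoplus_{i=1}^r PM(P_i)$. By Proposition~\ref{prop:persmod_from_path} each $PM(P_i)$ is an interval module --- a single-vertex $P_i$ yields an interval module supported on one slice, which is immediate from the construction --- and by Proposition~\ref{prop:persmod_reconstruction} we have $M\cong PM(\mathcal{GC}_\mu(M))$. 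Hence $M\cong\bigoplus_{i=1}^r PM(P_i)$ is interval-decomposable.

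Since the statement is a bookkeeping combination of results already established, I do not anticipate a genuine obstacle. The only points requiring care are the two inductions --- checking that the iterated form of Corollary~\ref{cor:sum_to_union} still produces a single coherent barcode basis $\mu$ for $M$, and that the disjoint-union identity for $PM$ in Proposition~\ref{prop:persmod_from_graphcode_union} behaves associatively --- together with the minor observation that a connected component of the graphcode consisting of a single vertex is a directed path and hence also contributes an interval summand.
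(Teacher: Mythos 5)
Your proof is correct and follows the same route as the paper's: both directions are obtained by combining Corollary~\ref{cor:sum_to_union} (resp.\ Proposition~\ref{prop:persmod_from_graphcode_union}) with Proposition~\ref{prop:graphcode_from_interval} (resp.\ Propositions~\ref{prop:persmod_from_path} and~\ref{prop:persmod_reconstruction}). The only difference is that you spell out the induction over the number of summands and the single-vertex-path edge case, which the paper applies implicitly by invoking the two-summand statements directly on an $r$-fold sum.
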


\begin{proof}
Suppose $M$ is interval-decomposable, i.e.\ there exist interval modules $J_i\colon G(m,n)\rightarrow\mathbf{Vec}$ such that $M\cong\bigoplus_{i=1}^r J_i$ . By Corollary \ref{cor:sum_to_union}, there exists a barcode basis such that $\mathcal{GC}_\mu(M)=\mathcal{GC}_{\text{id}}(J_1)\sqcup\cdots\sqcup \mathcal{GC}_{\text{id}}(J_r)$. By Proposition \ref{prop:graphcode_from_interval}, $\mathcal{GC}_{\text{id}}(J_i)$ is a directed path for all $1\leq i\leq r$. Conversely, assume there exists a barcode basis $\mu$ such that $\mathcal{GC}_\mu(M)=\mathcal{G}_1\sqcup\cdots\sqcup\mathcal{G}_r$ where $\mathcal{G}_i$ is a directed path for all $1\leq i\leq r$. By Proposition \ref{prop:persmod_from_graphcode_union}, $PM\big(\mathcal{GC}_\mu(M)\big)= PM(\mathcal{G}_1)\oplus\cdots\oplus PM(\mathcal{G}_r)$. By Proposition \ref{prop:persmod_from_path}, $PM(\mathcal{G}_i)$ is an interval module for all $1\leq i\leq r$. Finally, by Proposition \ref{prop:persmod_reconstruction}, $M\cong PM\big(\mathcal{GC}_\mu(M)\big)$. Therefore, $M$ is interval-decomposable.
\end{proof}

\noindent
Since the edges of $\mathcal{GC}_\mu(M)$ correspond to the matrix representations of the morphisms $(\eta^i)_{i=1}^{n\minus 1}$ in \eqref{eq:equivalence}, connecting consecutive slices, $\mathcal{GC}_\mu(M)$ is a disjoint union of directed paths if and only if $\eta^i$ has at most one non-zero entry per column and row. We say that a matrix $\eta^i$, satisfying this property is in \emph{normal form}.

If $M$ is interval-decomposable, the paths in $\mathcal{GC}_\mu(M)$ can be viewed as a generalization of bars in the one-parameter case. We can compute the interval decomposition, or in other words, the barcode of a one-parameter persistence module, represented by a sequence of matrices, by simultaneously transforming all matrices to normal form. Our new criterion for interval-decomposability in Theorem \ref{thm:interval_criterion} allows us to generalize this algorithm to the two-parameter setting.    
We introduce an algorithm that computes the interval decomposition of a two-parameter persistence module $M$ by simultaneously transforming all matrices $\eta^i$ to normal form via changes of barcode bases or decides that this is not possible.



\textbf{Valid column and row operations via change of barcode basis.} 
Our algorithm will proceed by iteratively trying to normalize the matrices $\eta^1,\ldots,\eta^{n\minus 1}$ using column and row operations. As in the case of ordinary matrices, representing linear maps, we can manipulate the matrix representation $\eta^i$, of a morphism between sums of one-parameter intervals \eqref{eq:interval_base_change}, via column and row operations, by applying certain isomorphisms $\phi_i$ and $\phi_{i\plus 1}$.
\begin{equation}\label{eq:interval_base_change}
\begin{tikzcd}
\bigoplus_{k=1}^{s_1} I^1_k \arrow[r,"\eta^1"] \arrow[d,"\phi_1","\cong"'] & \cdots \arrow[r,"\eta^{i\minus 1}"] & \bigoplus_{k=1}^{s_i} I^i_k \arrow[r,"\eta^i"] \arrow[d,"\phi_i","\cong"'] & \bigoplus_{k=1}^{s_{i\plus 1}} I^{i\plus 1}_k \arrow[r,"\eta^{i\plus 1}"] \arrow[d,"\phi_{i\plus 1}","\cong"'] & \cdots \arrow[r,"\eta^{n\minus 1}"] & \bigoplus_{k=1}^{s_n} I^n_k \arrow[d,"\phi_n","\cong"'] \\
\bigoplus_{k=1}^{s_1} I^1_k \arrow[r,"\overline{\eta}^1"] & \cdots \arrow[r,"\overline{\eta}^{i\minus 1}"] & \bigoplus_{k=1}^{s_i} I^i_k \arrow[r,"\overline{\eta}^i"] & \bigoplus_{k=1}^{s_{i\plus 1}} I^{i\plus 1}_k \arrow[r,"\overline{\eta}^{i\plus 1}"] & \cdots \arrow[r,"\overline{\eta}^{n\minus 1}"] & \bigoplus_{k=1}^{s_n} I^n_k 
\end{tikzcd}
\end{equation}
In contrast to linear maps between vector spaces the coefficients of the involved matrices represent morphisms between one-parameter intervals $I_l\rightarrow I_k$ which puts certain local constraints on the operations we can perform. 
If $I^i_k\lhd I^i_l$, there is an isomorphism $\phi_i$ that sends $I^i_r\rightarrow I^i_r$ for all $r\neq l$ and $I^i_l\rightarrow I^i_l+I^i_k$. This change of barcode basis affects the matrix representation of the morphism $\eta^i$ in the following way: We get the new matrix $\overline{\eta}^i$ by adding the $k$-th column of $\eta^i$ to the $l$-th column. A way in which morphisms between intervals behave differently than linear maps between one-dimensional vector spaces is that the composition of two non-zero morphisms $I^i_l\xrightarrow{1} I^i_k\xrightarrow{1} I^{i\plus 1}_r$ can be zero if $I^i_l\cap I^{i\plus 1}_r=\emptyset$. Therefore, after adding column $I^i_k$ to column $I^i_l$, we have to eliminate all non-zero entries in column $I^i_l$ where $I^i_l\cap I^{i\plus 1}_r=\emptyset$. If $I^{i\plus 1}_p\lhd I^{i\plus 1}_q$, a similar change of barcode basis $\phi_{i\plus 1}$ in the target of $\eta^i$, sending $I^{i\plus 1}_q\rightarrow I^{i\plus 1}_q+I^{i\plus 1}_p$, corresponds to adding the $q$-th row of $\eta^i$ to the $p$-th row.

\begin{figure}
    \centering
    \includegraphics[width=0.8\linewidth]{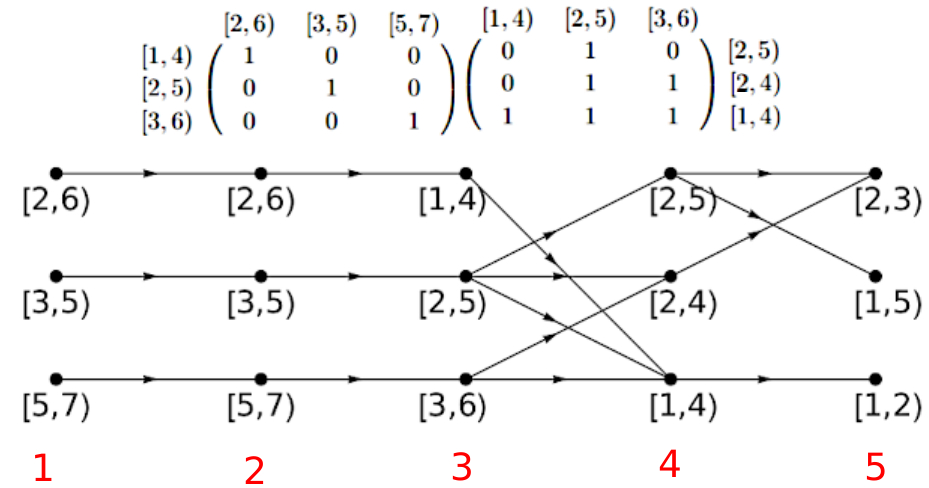}
    \caption{A graphcode that is a disjoint union of directed paths up to index $3$. At index $3$ we can not add column $1$ to column $2$ since $[2,6)\centernot{\lhd}[3,5)$ at index $2$. But we can add column $1$ to column $3$ since $[1,4)\cap[5,7)=\emptyset$.}
    \label{fig:graphcode_algo}
\end{figure}

The second constraint is that if $\eta^1,\ldots,\eta^{i\minus 1}$ in \eqref{eq:interval_base_change} are already in normal form, a column addition in $\eta^i$ will lead to a row addition in $\eta^{i\minus 1}$ which could destroy the normal form. Hence, we can only perform column operations which either don't affect the normal form of the matrices to the left or affect it in a way that can be fixed. Since $\eta^{i\minus 1}$ was in normal form before, a row addition will create at most one additional non-zero entry. This entry might be reduced by a column addition in $\eta^{i\minus 1}$ which will lead to a row addition in $\eta^{i\minus 2}$ and so on. The idea of our algorithm is that this cascade of base changes does not have to be performed. We only need to know that it could be performed. By the assumption that $\eta^1,\ldots,\eta^{i\minus 1}$ are in normal form and Theorem \ref{thm:interval_criterion}, the persistence module $M\vert_i\colon G(m,i)\rightarrow \mathbf{Vec}$, determined by $\eta^1,\ldots,\eta^{i\minus 1}$, is a sum of interval modules: $M\vert_i=\bigoplus_{l=1}^r J_l$. As depicted in Figure \ref{fig:graphcode_algo}, the intervals are exactly the paths in the corresponding graphcode up to index $i$. At index $i$ every one-parameter interval summand $I^i_k$ belongs to exactly one of those two-parameter intervals $J_{q_k}$. If we add $I^i_k=(J_{q_k})_i$ to $I^i_l=(J_{q_l})_i$ at index $i$, the additional non-zero element in $\eta^{i\minus 1}$, induced by the corresponding row addition, comes from the pivot of the predecessor $(J_{q_l})_{i\minus 1}$ on the path $J_{q_l}$ in the graphcode. The only way to eliminate it is by adding $(J_{q_k})_{i\minus 1}$ to $(J_{q_l})_{i\minus 1}$ in $\eta^{i\minus 1}$ and so on. Hence, we can add $I^i_k=(I_{q_k})_i$ to $I^i_l=(J_{q_l})_i$, without destroying the normal form to the left, if we can "add" the whole interval $J_{q_k}$ to $J_{q_l}$. We can do that if we can extend the morphism $I^i_l\xrightarrow{1} I^i_k$ to a morphism $J_{q_l}\rightarrow J_{q_k}$. This approach of reducing the matrix $\eta^i$ to normal form, while preserving the normal form up to index $i$, by valid operations on intervals, can be viewed as a direct generalization of the algorithm for general one-parameter persistence modules discussed in \cite{dey_et_al:LIPIcs.SoCG.2024.51}.

\begin{proposition} \label{prop:extending_morphisms}
Let $I,J\colon G(m,i)\rightarrow\mathbf{Vec}$ be interval modules, given as directed paths of one-parameter interval modules: 
\begin{equation}\label{eq:extending_morphisms}
\begin{tikzcd}
\cdots \arrow[r] & J_{i\minus 2} \arrow[r,"J_{i\minus 2}^{i\minus 1}"] \arrow[d,"\psi_{i\minus 2}"] & J_{i\minus 1} \arrow[r,"J_{i\minus 1}^i"] \arrow[d,"\psi_{i\minus 1}"] & J_i \arrow[d,"\psi_i"] &[-15pt] \colon &[-30pt] J \arrow[d,"\psi"] \\
\cdots \arrow[r] & I_{i\minus 2} \arrow[r,"I_{i\minus 2}^{i\minus 1}"] & I_{i\minus 1} \arrow[r,"I_{i\minus 1}^i"] & I_i & \colon & I
\end{tikzcd}
\end{equation}
such that $I_i,J_i\neq 0$ and there exists a non-zero morphism $J_i\xrightarrow{\psi_i} I_i$. Then we can extend $\psi_i$ to a morphism $\psi\colon J\rightarrow I$ if one of the following conditions holds:
\begin{enumerate}
    \item $J_{i\minus 1}=0$.
    \item $J_{i\minus 1}\cap I_{i}=\emptyset$.
    \item There exists a non-zero morphism $J_{i\minus 1}\xrightarrow{\psi_{i\minus 1}}I_{i\minus 1}$ that can be extended to a morphism $\psi\colon J\setminus J_i\rightarrow I\setminus I_i$.
\end{enumerate}
\end{proposition}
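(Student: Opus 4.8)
The plan is to handle each of the three conditions as a separate sufficient case and, in each, to write down the extension $\psi=(\psi_k)_{k\le i}$ explicitly and verify that every square in \eqref{eq:extending_morphisms} commutes. Recall that a morphism between one-parameter interval modules is either zero or the unique non-zero one, so specifying $\psi$ amounts to deciding for each height $k$ whether $\psi_k$ is zero or non-zero, subject only to commutativity of the square joining heights $k\minus 1$ and $k$. For conditions 1 and 2 I would set $\psi_k\coloneqq 0$ for all $k\le i\minus 1$ and keep $\psi_i$; then every square among heights $\le i\minus 1$ commutes trivially, and for the square between $i\minus 1$ and $i$ it remains to see that $\psi_i\circ J_{i\minus 1}^i = I_{i\minus 1}^i\circ\psi_{i\minus 1}=0$. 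In case 1 this holds because $J_{i\minus 1}=0$ forces $J_{i\minus 1}^i$ to be the zero map; in case 2 it holds because $J_{i\minus 1}\cap I_i=\emptyset$ implies $I_i\centernot{\lhd}J_{i\minus 1}$, hence $\text{Hom}(J_{i\minus 1},I_i)=0$ and $\psi_i\circ J_{i\minus 1}^i$ has no choice but to vanish.

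The substance is condition 3. Here we are given a morphism $\psi'\colon J\setminus J_i\rightarrow I\setminus I_i$ with $\psi'_{i\minus 1}=\psi_{i\minus 1}\neq 0$, and I would set $\psi_k\coloneqq\psi'_k$ for $k\le i\minus 1$ and keep $\psi_i$. All squares below height $i$ commute because $\psi'$ is a morphism, so everything reduces to the single identity
\[
\psi_i\circ J_{i\minus 1}^i \;=\; I_{i\minus 1}^i\circ\psi_{i\minus 1}
\]
in the at-most-one-dimensional space $\text{Hom}(J_{i\minus 1},I_i)$; it therefore suffices to show the two composites are non-zero under exactly the same condition. To this end I would first record two structural facts about an interval module $M\colon G(m,i)\rightarrow\mathbf{Vec}$ in the slicewise perspective. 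As already noted in the text, each slice $M_{(\minus,h)}$ is either $0$ or a one-parameter interval $[b_h,d_h)$; beyond that, (i) the births $b_h$ are non-increasing in $h$ --- by convexity of the support, since $b_{h_1}<b_{h_2}$ with $h_1<h_2$ would force $(b_{h_1},h_2)$ into the support via $(b_{h_1},h_1)\le(b_{h_1},h_2)\le(b_{h_2},h_2)$ --- and (ii) whenever $M_{(\minus,i\minus 1)}$ and $M_{(\minus,i)}$ are both non-zero they overlap in scale: they cannot be disjoint with $M_{(\minus,i\minus 1)}$ to the left by (i), and they cannot be disjoint with $M_{(\minus,i)}$ to the left because then the support vertices at the maximal height $i$ would have no neighbours at height $i\minus 1$, disconnecting the support and contradicting connectedness. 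Consequently, whenever $M_{(\minus,i\minus 1)},M_{(\minus,i)}\neq 0$ the map $M_{(\minus,i\minus 1)}^{(\minus,i)}$ is the unique non-zero morphism and $M_{(\minus,i)}\lhd M_{(\minus,i\minus 1)}$. Applied to $J$ this gives $J_i\lhd J_{i\minus 1}$ ($J_{i\minus 1}\neq 0$ since we are not in case 1, $J_i\neq 0$ by hypothesis), and applied to $I$ it gives $I_i\lhd I_{i\minus 1}$ ($I_{i\minus 1}\neq 0$ because $\psi_{i\minus 1}\neq 0$).

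Each of the two composites in the displayed identity is then a composition $A\xrightarrow{1}B\xrightarrow{1}C$ of unique non-zero morphisms of one-parameter intervals with $B\lhd A$ and $C\lhd B$; under these entanglement constraints $A\cap B\cap C$ collapses to $A\cap C$, so such a composition is non-zero exactly when $A\cap C\neq\emptyset$. Taking $A=J_{i\minus 1}$, $B=J_i$, $C=I_i$ for $\psi_i\circ J_{i\minus 1}^i$ and $A=J_{i\minus 1}$, $B=I_{i\minus 1}$, $C=I_i$ for $I_{i\minus 1}^i\circ\psi_{i\minus 1}$, the non-vanishing condition in both cases is $J_{i\minus 1}\cap I_i\neq\emptyset$. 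Hence either both composites vanish, or both equal the unique non-zero element of $\text{Hom}(J_{i\minus 1},I_i)$; either way the square commutes and $\psi$ is a well-defined morphism $J\rightarrow I$ extending $\psi_i$. The main obstacle is precisely this last step of case 3 --- recognizing that the two routes around the critical square are governed by one and the same non-vanishing condition --- and it rests on the two auxiliary observations about interval modules above; the degenerate sub-cases (a lower fiber vanishing, or $i\minus 1=1$) are routine once the generic situation is settled.
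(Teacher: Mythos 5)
Your proof is correct and follows the same overall plan as the paper's: treat the three conditions as separate sufficient cases, set $\psi_k = 0$ for $k\leq i-1$ in cases~1 and~2, and in case~3 reuse the given extension $\psi'$ below height $i$, reducing everything to commutativity of the single square between heights $i-1$ and $i$.

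Where you go beyond the paper is in justifying the crux of case~3. The paper simply asserts that $\psi_i\circ J_{i\minus 1}^i$ and $I_{i\minus 1}^i\circ\psi_{i\minus 1}$ are ``both non-zero if and only if $J_{i\minus 1}\cap I_i\neq\emptyset$,'' whereas you actually derive this: you first prove from convexity and connectedness that births of an interval module are non-increasing in height and that consecutive non-zero slices overlap, so that $J_i\lhd J_{i\minus 1}$ and $I_i\lhd I_{i\minus 1}$; you then observe that for a chain $A\xrightarrow{1}B\xrightarrow{1}C$ with $B\lhd A$ and $C\lhd B$, entanglement forces $A\cap B\cap C=A\cap C$, so the composite is non-zero precisely when $A\cap C\neq\emptyset$; applying this to both routes around the square gives the identical non-vanishing condition $J_{i\minus 1}\cap I_i\neq\emptyset$. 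This transitivity-style lemma about nested entanglements is exactly what the paper leaves implicit, and your version makes the argument self-contained. One small remark: in your auxiliary fact~(ii) you argue disconnectedness ``at the maximal height $i$,'' which is fine here since the modules live over $G(m,i)$, but the same overlap claim for internal slices follows equally from convexity alone (the non-increasing-births argument plus the symmetric one for deaths), so the appeal to the top level is not essential. That is a stylistic point, not a gap.
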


\begin{proof}
A morphism $\psi\colon J\rightarrow I$ is well-defined if $\psi_k\colon J_k\rightarrow I_k$ is well-defined for all $1\leq k\leq i$ and all the squares in \eqref{eq:extending_morphisms} commute.
\begin{enumerate}
    \item If $J_{i\minus 1}=0$, we can define $J_{k}\xrightarrow{\psi_k}I_k$ as the zero-morphism for all $1\leq k\leq i\minus 1$ which is obviously well-defined.
    \item If $J_{i\minus 1}\cap I_i=\emptyset$, the composition $\psi_i\circ J_{i\minus 1}^i=0$. Thus, we can define $\psi_{k}=0$ for all $1\leq k\leq i\minus 1$ without violating commutativity of the rightmost square in \eqref{eq:extending_morphisms}. 
    \item The existence of a non-zero morphism $\psi_{i\minus 1}\colon J_{i\minus 1}\rightarrow I_{i\minus 1}$ implies $I_{i\minus 1},J_{i\minus 1}\neq 0$ and, thus, also $I_{i\minus 1}^i,J_{i\minus 1}^i\neq 0$. By assumption $\psi\colon J\setminus J_i\rightarrow I\setminus I_i$ is a well-defined morphism up to index $i-1$. Since $\psi_i\circ J_{i\minus 1}^i$ and $I_{i\minus 1}^i\circ \psi_{i\minus 1}$ are both non-zero if and only if $J_{i\minus 1}\cap I_i\neq\emptyset$, we can extend it to index $i$ by adding $\psi_i$. \qedhere
\end{enumerate}
\end{proof}

\noindent
See Figure \ref{fig:graphcode_algo} for an example of the conditions in Proposition \ref{prop:extending_morphisms}. 
We can use this extension $\psi\colon J_{q_l}\rightarrow J_{q_k}$ of the morphism $I^i_l\xrightarrow{1} I^i_k$ to define an isomorphism of two-parameter interval modules $\phi\colon \bigoplus_{l=1}^r J_l\rightarrow \bigoplus_{l=1}^r J_l$ by $J_{q_l}\xrightarrow{(\text{id}\hspace{2pt}\psi)} J_{q_l}+J_{q_k}$ and $J_{r}\xrightarrow{\text{id}} J_r$ for $r\neq q_l$. By construction, the effect of $\phi$ is to add $I^i_k$ to $I^i_l$ in $\eta^i$ while preserving the normal form of $\eta^1,\ldots,\eta^{i\minus 1}$. Therefore, we define a column operation $I^i_k\xmapsto{+}I^i_l$ in $\eta^i$ as valid if $I^i_k\lhd I^i_l$ and one of the conditions in Proposition \ref{prop:extending_morphisms} is satisfied. The row operations on $\eta^i$ do not affect the matrices $\eta^1,\ldots, \eta^{i\minus 1}$. Hence, a row operation $I^{i\plus 1}_q\xmapsto{+}I^{i\plus 1}_p$ is valid if $I^{i\plus 1}_p\lhd I^{i\plus 1}_q$. This leads to the following algorithm which tries to transform all matrices to normal form using valid column and row operations.

In the following we will assume that the intervals $I^i_k=[b^i_k,d^i_k)$ indexing columns and rows of the matrices $\eta^i$ are ordered strictly lexicographically, i.e.\ $k<l$ implies $b^i_k<b^i_l$ or $b^i_k=b^i_l$ and $d^i_k<d^i_l$. In particular, we assume that there are no identical intervals at a certain index $i$. As a consequence $I^i_k\lhd I^i_l$ only if $k<l$ and we can only add columns from left to right and rows from bottom to top. \vspace{3pt}

\noindent
\textbf{Algorithm: } {\sc Interval-Decomposition}

\begin{itemize}
    \item \textbf{Input:} A sequence of matrices $\eta^1,\ldots,\eta^{n\minus 1}$ as in \eqref{eq:equivalence} such that the intervals $I^i_k$ indexing columns and rows are ordered strictly lexicographically.
    \item \textbf{General Procedure:} For $i=1,\ldots,n-1$. Reduce $\eta^i$ to normal form while preserving the normal form of $\eta^j$ for $1\leq j<i$ or decide that this is not possible. The algorithm maintains and updates after every step which column operations are valid.
    \item \textbf{Procedure for $\eta^i$:} For $j=1,\ldots, s_i$:
    \begin{enumerate}
        \item Reduce every possible non-zero entry of the $j$-th column of $\eta^i$ with valid column additions from the left.
        \item If the column is not zero, find the lowest row $k$ with a non-zero entry in column $j$.
        \item If row $k$ has a pivot conflict to the left, terminate and output "not interval-decomposable".
        \item Otherwise, if possible, eliminate column $j$ from bottom to top with valid additions of row $k$, while doing the corresponding column operations on $\eta^{i\plus 1}$. If this is not possible terminate and output "not interval-decomposable". 
    \end{enumerate}
    \item \textbf{Update of valid column operations:} After the matrix $\eta^{i\minus 1}$ is reduced to normal form, whose rows are the columns of $\eta^{i}$, let $I^{i}_k$ and $I^{i}_l$ be two columns of $\eta^{i}$. Then $I^{i}_k\xmapsto{+} I^{i}_l$ if $I^{i}_k\lhd I^{i}_l$ and one of the following conditions holds:
    \begin{enumerate}
        \item $I^{i}_l$ is a zero-row in $\eta^{i\minus 1}$.
        \item $I^{i}_l$ has pivot $I^{i\minus 1}_t$ in $\eta^{i\minus 1}$ and $I^i_k\cap I^{i\minus 1}_t=\emptyset$.
        \item $I^{i}_k$ has pivot $I^{i\minus 1}_s$ and $I^{i}_l$ has pivot $I^{i\minus 1}_t$ in $\eta^{i\minus 1}$ and $I^{i\minus 1}_s\xmapsto{+}I^{i\minus 1}_t$.
    \end{enumerate}
    \item \textbf{Output:} The intervals in the interval-decomposition of $M$ or "not interval-decomposable".
\end{itemize}

\noindent
We note that the algorithm can also be adapted to compressed graphcodes. The main observation is that an edge spanning multiple slices only has to be processed at its endpoints.  

Every valid operation performed by the algorithm corresponds to applying a certain isomorphism to the input module. If the algorithm does not terminate before processing all matrices, the result is a persistence module, isomorphic to the input, which has a graphcode that is a disjoint union of directed paths. By Theorem \ref{thm:interval_criterion}, these paths are the intervals in the decomposition. 
The following theorem shows that if the algorithm terminates because of a non-resolvable conflict, the input module is not interval-decomposable.

\begin{theorem} \label{thm:correctness_algorithm}
The algorithm {\sc Interval-Decomposition} is correct.
\end{theorem}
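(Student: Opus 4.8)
The plan is to show that whenever the algorithm halts with "not interval-decomposable", no barcode basis can bring all $\eta^i$ into normal form simultaneously, so by Theorem~\ref{thm:interval_criterion} the module $M$ is genuinely not interval-decomposable. The key point is to maintain a precise \emph{invariant} capturing what the algorithm's "valid operations" are actually generating: after processing $\eta^1,\ldots,\eta^{i-1}$ into normal form, the paths of the current graphcode up to index $i$ are exactly the summands $J_1,\ldots,J_r$ of $M|_i$, and the valid column operations on $\eta^i$ are precisely the column operations induced by \emph{all} automorphisms of $\bigoplus_l J_l$ that preserve the normal form to the left; this equivalence is where Proposition~\ref{prop:extending_morphisms} does the work. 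Once this is established, a column $j$ of $\eta^i$ that cannot be reduced to a column with a fresh pivot via valid operations corresponds to a submodule situation that cannot be disentangled by \emph{any} change of barcode basis, because every such change restricted to indices $\le i$ is (up to harmless pivot-preserving moves) one of the automorphisms we have already accounted for.

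The main steps, in order, are the following. First I would fix notation for the "state" of the algorithm: the reduced matrices $\bar\eta^1,\ldots,\bar\eta^{i-1}$ in normal form, the interval decomposition $M|_i\cong\bigoplus_{l=1}^r J_l$ read off from the paths, and the index set of "allowed" pivots for rows of $\eta^i$ (those not already claimed by an earlier column in a way that forces a conflict). Second, I would prove the invariant that the valid column operations generated on $\eta^i$ by the \textbf{Update} rule are closed under composition and generate exactly the group of matrices arising from isomorphisms $\phi\colon\bigoplus_l J_l\to\bigoplus_l J_l$ that restrict to the identity on the pivot structure of $\bar\eta^1,\ldots,\bar\eta^{i-1}$: one inclusion is Proposition~\ref{prop:extending_morphisms} (each condition there is exactly one of the three clauses of the \textbf{Update} rule, and conversely each valid op lifts to such a $\phi$), and the other inclusion follows because any such $\phi$ decomposes into elementary interval-to-interval additions $J_{q_k}\to J_{q_k}+J_{q_l}$, each of which, projected to index $i$, is a valid column op. Third, I would argue that a pivot conflict in step~3 (row $k$ claimed by an earlier reduced column $j'<j$) means the one-parameter interval summand at index $i$ created by column $j$ is forced to "merge" with the one created by $j'$ under every admissible base change, which contradicts the existence of a normal form; and likewise that a failure in step~4 (row $k$ cannot be eliminated by valid row additions, equivalently the morphism $I^i_l\xrightarrow{1}I^i_k$ does not extend) means the corresponding entanglement in $\eta^{i-1}$ cannot be cleared without destroying an already-fixed pivot, again obstructing any global normal form. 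Finally, I would assemble these into the contrapositive: if $M$ were interval-decomposable, Corollary~\ref{cor:sum_to_union} together with Proposition~\ref{prop:graphcode_from_interval} gives a barcode basis $\mu$ with $\mathcal{GC}_\mu(M)$ a disjoint union of paths; tracking how $\mu$ transforms $\eta^1,\ldots,\eta^{i-1}$ and using the invariant, the transformation it performs on $\eta^i$ is realizable by valid operations (after reordering of columns with equal birth/death, which the strict lexicographic ordering assumption rules out, so in fact no reordering is needed), so the algorithm would have succeeded on column $j$ — contradiction.

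The hard part will be Step~2, the exact characterization of the group of valid column operations. The subtlety is that a valid column addition in $\eta^i$ implicitly commits to a cascade of additions further left (clause~3 of the \textbf{Update} rule is recursive), and one must show that (a) these cascades are consistent — performing them in any order yields the same normal-form-preserving automorphism — and (b) the recursion bottoms out exactly when one of the base cases (zero-row, or empty interval intersection) of Proposition~\ref{prop:extending_morphisms} applies, with no "stuck" states other than the genuine obstructions reported in steps~3 and~4. I expect this to require a careful induction on $i$ with a simultaneous statement about the pivot structure of $\bar\eta^{i-1}$, and the figure-level intuition of Figure~\ref{fig:graphcode_algo} (adding a whole path $J_{q_k}$ to a whole path $J_{q_l}$) will have to be turned into a precise claim that the set of "addable pairs of paths" is exactly what the \textbf{Update} rule tracks. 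A secondary, more routine obstacle is bookkeeping the interaction between the row operations on $\eta^i$ (which feed forward as column operations on $\eta^{i+1}$) and the validity data for the next stage, but this is handled by simply noting that row operations never touch $\eta^1,\ldots,\eta^{i-1}$ and that the \textbf{Update} rule for stage $i+1$ only depends on the already-finalized $\bar\eta^i$.
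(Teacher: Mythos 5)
Your high-level plan (show that a refused operation in the algorithm is an obstruction to any normal-form-producing barcode basis, then invoke Theorem~\ref{thm:interval_criterion}) is the same plan the paper follows, and your contrapositive framing at the end is essentially the paper's opening move. But there are genuine gaps in the route you propose to get there.

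First, your Step~2 — that the valid column operations ``generate exactly the group'' of normal-form-preserving automorphisms of $\bigoplus_l J_l$ — is both the crux of your proof and precisely the part you leave as ``the hard part.'' As stated it is vague (``restrict to the identity on the pivot structure'' is not a well-defined condition), and the two inclusions you sketch each hide serious work: the claim that every normal-form-preserving automorphism decomposes into elementary summand additions, each of which projects to a valid column op, is not obviously true and is not something Proposition~\ref{prop:extending_morphisms} gives you. The paper avoids this entirely: it never characterizes a group of operations. Instead it looks at the matrix $\overline\eta^i$ at the moment of termination, uses the strict lexicographic ordering and uniqueness of the normal form to force $\theta^j=\overline\eta^j$ for $j<i$, and then argues \emph{locally} that the one column addition a hypothetical normalizing $\phi_i$ would have to perform must lift to an interval morphism $J_{q_t}\to J_{q_s}$ — which would have made that very addition valid. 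That direct argument is much shorter than a group characterization and sidesteps the consistency-of-cascades issue you flag.

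Second, you have misread step~4 of the algorithm. Step~4 performs \emph{row} additions on $\eta^i$ using the pivot row $k$, and each such row addition propagates as a \emph{column} operation on $\eta^{i+1}$, not on $\eta^{i-1}$. The validity condition for a row operation is simply the entanglement $I^{i+1}_p\lhd I^{i+1}_q$; the extension criterion from Proposition~\ref{prop:extending_morphisms} governs only column operations. Your description (``row $k$ cannot be eliminated,'' ``the morphism $I^i_l\to I^i_k$ does not extend,'' ``entanglement in $\eta^{i-1}$'') conflates the two. Consequently you miss the actual argument the paper needs for step-4 termination: any normalizing $\phi_{i+1}$ must clear the entry in row $q$ by a downward cascade of row additions ending at the pivot row $r$, and the key lemma is that the entanglement relation $\lhd$ is \emph{transitive} among rows that have a non-zero entry in the same column $t$ (because all those intervals contain the birth point of $I^i_t$). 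Transitivity gives $I^{i+1}_q\lhd I^{i+1}_r$, making the refused row addition valid after all — contradiction. Nothing in your proposal plays the role of this transitivity observation, and without it the step-4 case does not close.

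So: same strategy at the top level, but a more ambitious and unproved intermediate claim where the paper argues locally, plus a misunderstanding of what step~4 does and what validity means for rows. You would need to either prove the group characterization (harder than the original) or switch to the paper's pointwise-at-termination argument, and in either case you need the transitivity-of-entanglement lemma for the step-4 case.
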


\begin{proof} 

\textbf{1.} The algorithm only performs valid column and row operations. By Proposition \ref{prop:extending_morphisms}, if a column operation $I^i_k\xmapsto{+} I^i_l$ is set to valid by the algorithm there is an isomorphism of interval modules preserving the normal form of $\eta^1,\ldots,\eta^{i\minus 1}$ while adding $I^i_k$ to $I^i_l$ in $\eta^i$ and leaving $\eta^{i\plus 1},\ldots,\eta^{n\minus 1}$ unchanged. If a row operation $I^{i\plus 1}_q\xmapsto{+} I^{i\plus 1}_p$ is valid, there is an isomorphism adding row $I^{i\plus 1}_q$ to $I^{i\plus 1}_p$ in $\eta^i$ and column $I^{i\plus 1}_p$ to $I^{i\plus 1}_q$ in $\eta^{i\plus 1}$ while leaving all other matrices unchanged. Therefore, the algorithm only applies isomorphisms to the input persistence module $M$. If the algorithm does not terminate before processing all matrices $\eta^1,\ldots,\eta^{n\minus 1}$, by Theorem \ref{thm:interval_criterion}, the resulting persistence module is a sum of interval modules isomorphic to the input $M$.

\textbf{2.} Suppose the algorithm terminates at index $i$ in step 3 but $M$ is interval-decomposable. Let $\overline{\eta}^i$ be the matrix at the point where the algorithm terminates. Then $\overline{\eta}^i$ has to be of the form:
\begin{equation} \label{eq:phi_prime_matrix}
\begin{blockarray}{ccccccc}
& \cdots & I_s^i &\cdots & I_t^i & \cdots \\
\begin{block}{c(ccc|ccc)}
\vdots & . & . & . & . & . \\ 
I_r^{i\plus 1} & . & 1 & . & 1 & . \\
\vdots & . & . & . & . & . \\
\end{block}
\end{blockarray}    
\end{equation}
where the left block is in normal form and $I_s^i$ can not be added to $I_t^i$ (termination criterion). Since $M$ is interval-decomposable there exists an isomorphism $\phi$:  
\begin{equation}\label{eq:interval_base_change_2}
\begin{tikzcd}
\bigoplus_{k=1}^{s_1} I^1_k \arrow[r,"\overline{\eta}^1"] \arrow[d,"\phi_1","\cong"'] & \cdots \arrow[r,"\overline{\eta}^{i\minus 1}"] & \bigoplus_{k=1}^{s_i} I^i_k \arrow[r,"\overline{\eta}^i"] \arrow[d,"\phi_i","\cong"'] & \bigoplus_{k=1}^{s_{i\plus 1}} I^{i\plus 1}_k \arrow[r,"\eta^{i\plus 1}"] \arrow[d,"\text{id}","\cong"'] & \cdots \arrow[r,"\eta^{n\minus 1}"] & \bigoplus_{k=1}^{s_n} I^n_k \arrow[d,"\text{id}","\cong"'] \\
\bigoplus_{k=1}^{s_1} I^1_k \arrow[r,"\theta^1"] \arrow[d,"\text{id}","\cong"'] & \cdots \arrow[r,"\theta^{i\minus 1}"] & \bigoplus_{k=1}^{s_i} I^i_k \arrow[r,"\overline{\theta}^i"] \arrow[d,"\text{id}","\cong"'] & \bigoplus_{k=1}^{s_{i\plus 1}} I^{i\plus 1}_k \arrow[r,"\eta^{i\plus 1}"] \arrow[d,"\phi_{i\plus 1}","\cong"'] & \cdots \arrow[r,"\eta^{n\minus 1}"] & \bigoplus_{k=1}^{s_n} I^n_k \arrow[d,"\phi_n","\cong"'] \\
\bigoplus_{k=1}^{s_1} I^1_k \arrow[r,"\theta^1"] & \cdots \arrow[r,"\theta^{i\minus 1}"] & \bigoplus_{k=1}^{s_i} I^i_k \arrow[r,"\theta^i"] & \bigoplus_{k=1}^{s_{i\plus 1}} I^{i\plus 1}_k \arrow[r,"\theta^{i\plus 1}"] & \cdots \arrow[r,"\theta^{n\minus 1}"] & \bigoplus_{k=1}^{s_n} I^n_k 
\end{tikzcd}
\end{equation}
such that all $\theta^j$ are in normal form and $\overline{\theta}^i$ is column reduced. We can w.l.o.g.\ assume that the columns and rows of $\theta^j$ are also ordered lexicographically. If $\overline{\theta}^i$ would not be column reduced, there would be a pivot conflict in some row that is impossible to resolve by bottom to top row additions. But, by the lexicographic order of the intervals, an isomorphism $\phi_{i\plus 1}$ can only apply bottom to top row additions on $\overline{\theta}^i$ to bring it to normal form. Since we also assume that all bars at a certain index are distinct, the normal form is unique and $\theta^j=\overline{\eta}^j$ for all $1\leq j<i$. Let $M\vert_i=\bigoplus_{k=1}^r J_k$ be the persistence module defined by $\overline{\eta}^1,\ldots,\overline{\eta}^{i\minus 1}$ which is a sum of intervals by Theorem \ref{thm:interval_criterion}. Then the restriction $\phi\vert_i\colon M\vert_i\rightarrow M\vert_i$ is a morphism of interval modules which can be factored into morphisms $J_l\rightarrow J_k$ between summands. The effect of applying $\phi\vert_i$ at index $i$ is to column reduce $\overline{\eta}^i$. By the lexicographic order, every isomorphism $\phi_i$ can only add columns from left to right. Since the left block in \eqref{eq:phi_prime_matrix} is in normal form and there is a pivot conflict between column $I^i_s$ and $I^i_t$, the only possibility to column reduce $\overline{\eta}^i$ is to add $I^i_s$ to $I^i_t$. If $J_{q_s}$ is the two-parameter interval containing $I^i_s$ and $J_{q_t}$ the interval containing $I^i_t$, then the component morphism $J_{q_t}\rightarrow J_{q_s}$ of $\phi\vert_i$ has to be non-zero at index $i$. In other words, $\phi\vert_i$ has to add $J_{q_s}$ to $J_{q_t}$. But, if $I^i_t\xrightarrow{1} I^i_s$ can be extended to a morphism $J_{q_t}\rightarrow J_{q_s}$, the column addition $I^i_s\xmapsto{+}I^i_t$ would be valid and the algorithm wouldn't terminate.

\textbf{3.} Suppose the algorithm terminates at index $i$ in step 4 but $M$ is interval-decomposable. Let $\overline{\eta}^i$ be the matrix at the point where the algorithm terminates. Then $\overline{\eta}^i$ has to be of the form:
\begin{equation} \label{eq:phi_prime_matrix_2}
\begin{blockarray}{ccccccc}
& \cdots & I_s^i &\cdots & I_t^i & \cdots \\
\begin{block}{c(ccc|ccc)}
\vdots & . & . & . & . & . \\ 
I_q^{i\plus 1} & . & . & . & 1 & . \\
\vdots & . & . & . & 0 & . \\ 
I_r^{i\plus 1} & . & . & . & 1 & . \\
\vdots & . & . & . & . & . \\
\end{block}
\end{blockarray}    
\end{equation}
where the left part of the matrix is in normal form, $I_r^{i\plus 1}$ can not be added to $I_q^{i\plus 1}$ and there is no column to the left with pivot in row $q$ that can be added to $I_t^i$. Moreover, there is no pivot conflict in row $r$ up to column $t$. Since $M$ is interval-decomposable, there exists an isomorphism as in \eqref{eq:interval_base_change_2} such that all $\theta^i$ are in normal form.

The isomorphism $\phi_i$ sends $I_t^i$ to a sum of intervals from the left without destroying the normal form in $\eta^1,\ldots,\eta^{i\minus 1}$. Since all the columns to the left of $I_t^i$ are zero or unit columns and there was no pivot conflict, the non-zero entry in row $r$ can not be eliminated. Similarly if the non-zero entry in column $t$ could be eliminated by adding a column from the left this would have happened in step 1. Thus, the non-zero entry in row $q$ can also not be eliminated and $\overline{\theta}^i$ has both these entries in column $t$. Moreover, the entry in row $r$ is still the pivot of column $t$. If the pivot would be lowered by adding a column from the left that would generate a pivot conflict but $\overline{\theta}^i$ is column reduced. 


Since $\phi_{i\plus 1}$ can only transform $\overline{\theta}^{i}$ to normal form by bottom to top row additions, it has to eliminate the non-zero entry in row $q$ with a non-zero entry in a row $p_1$ below. This non-zero entry in row $p_1$ has to be eliminated again by a non-zero entry in a row $p_2$ below. This process continues until we reach a non-zero entry in a row $p_w$ that is eliminated by the entry in row $r$. If an entry in row $p$ in column $t$ is non-zero this implies that $I_p^{i\plus 1}\lhd I^i_t$. In particular, every such interval $I^{i\plus 1}_p$ contains the starting point of the interval $I^i_t$. In this case, $I^{i\plus 1}_{p_1}\lhd I^{i\plus 1}_{p_2}$ and $I^{i\plus 1}_{p_2}\lhd I^{i\plus 1}_{p_3}$ implies $I^{i\plus 1}_{p_1}\lhd I^{i\plus 1}_{p_3}$. Hence, the relation of entanglement of intervals corresponding to rows with non-zero entries in column $t$ is transitive. Therefore, $I_{q}^{i\plus 1}\lhd I_{p_1}^{i\plus 1}\lhd\cdots\lhd I_{p_w}^{i\plus 1}\lhd I^{i+1}_{r}$ implies $I^{i\plus 1}_q\lhd I^{i\plus 1}_r$ and the row addition $I^{i\plus 1}_r\xmapsto{+}I^{i\plus 1}_q$ is valid which contradicts the termination of the algorithm. 

\end{proof}

Since the update of the valid column operations at each step takes at most quadratic time and the algorithm has to reduce every matrix $\eta^i$, the overall complexity of the algorithm is cubic in the size of all input matrices.               

\begin{theorem} \label{thm:complexity_algorithm}
{\sc Interval-Decomposition} runs in time $O\big(\text{min}(s^3,ns_\text{max}^3)\big)$ where $n$ is the number of slices, $s_\text{max}=\underset{1\leq i\leq n}{\text{max}} s_i$ and $s=\sum_{i=1}^n s_i$.
\end{theorem}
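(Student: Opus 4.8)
The plan is to bound the work spent in the $i$-th iteration of the outer loop — that is, on reducing $\eta^i$ — by a cubic polynomial in the local sizes $s_i, s_{i+1}, s_{i+2}$, and then to sum over $i$ twice: once using the uniform bound $s_i \le s_{\max}$ to obtain $O(n s_{\max}^3)$, and once using convexity of the cube to obtain $O(s^3)$.

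For the per-iteration bound, I would argue as follows. Reducing $\eta^i$ to normal form consists of the left-to-right column reduction of step $1$ (together with the constant-overhead pivot tests of steps $2$ and $3$), followed by the bottom-to-top row eliminations of step $4$. As in ordinary matrix reduction, the column reduction uses $O(s_i^2)$ column operations, each touching a column of length $s_{i+1}$; the extra zeroing of entries that become zero compositions after a valid column addition adds only $O(s_{i+1})$ per operation. Step $4$ clears each of the $s_i$ columns with at most $s_{i+1}$ additions of its pivot row, hence performs $O(s_i s_{i+1})$ row operations on $\eta^i$, each of cost $O(s_i)$, and each of these induces exactly one column operation on $\eta^{i+1}$, of cost $O(s_{i+2})$. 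All of this is subsumed by $O\big((s_i+s_{i+1}+s_{i+2})^3\big)$. Rebuilding the table of valid column operations for $\eta^i$ tests, for each ordered pair of columns, the three conditions of the update rule; each test is $O(1)$ given the normal form of $\eta^{i-1}$ and the already-computed validity table of $\eta^{i-1}$, so the rebuild costs $O(s_i^2)$, which is absorbed. Since an early termination in step $3$ or $4$ only does less work, iteration $i$ costs $O\big((s_i+s_{i+1}+s_{i+2})^3\big)$.

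Summing over the iterations, $\sum_{i=1}^{n-1}(s_i+s_{i+1}+s_{i+2})^3 \le 27(n-1)\, s_{\max}^3 = O(n s_{\max}^3)$; and since $(s_i+s_{i+1}+s_{i+2})^3 \le 27\max(s_i,s_{i+1},s_{i+2})^3 \le 27(s_i^3+s_{i+1}^3+s_{i+2}^3)$, the same sum is $O\big(\sum_{j=1}^n s_j^3\big) = O\big((\sum_j s_j)^3\big) = O(s^3)$, because every monomial in the expansion of $(\sum_j s_j)^3$ is nonnegative. Taking the minimum of the two estimates gives the claimed bound $O\big(\min(s^3, n s_{\max}^3)\big)$.

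I do not expect a genuine obstacle here, but the estimate above quietly relies on one structural fact that I would verify explicitly: no column of $\eta^i$ is ever processed a second time. This holds because the row eliminations of step $4$ never disturb the portion of $\eta^i$ to the left of the current column — the pivot row being added has no entry in that block, as otherwise step $3$ would already have reported a pivot conflict — so the column counts used above are correct and the iteration-by-iteration summation is legitimate.
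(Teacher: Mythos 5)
Your proof is correct and follows essentially the same strategy as the paper's: bound the work of iteration $i$ by a cubic polynomial in the local slice sizes, observe that the validity-table update is $O(s_i^2)$ and hence dominated, and sum over iterations to obtain both $O(s^3)$ and $O(ns_{\max}^3)$. The paper uses the sharper per-iteration estimate $O(s_i^2 s_{i+1} + s_i s_{i+1} s_{i+2})$ where you use the coarser $O\big((s_i+s_{i+1}+s_{i+2})^3\big)$, but the resulting bounds coincide, and your explicit check that step 4's row additions never disturb the already-normalized left block (since otherwise step 3 would have flagged a pivot conflict) is a correct verification of a fact the paper leaves implicit.
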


\begin{proof}
Recall the procedure to reduce the $j$-th column of $\eta^i$.
In step 1, we add every other column to the $j$-th column in the worst case.
Since $\eta^i$ is of size $s_{i\plus 1}\times s_i$,
the total cost for all those column additions is $O(s_i s_{i\plus 1})$.
Steps 2 and 3 are simply bounded by $O(s_{i+1})$.
In Step 4 we have to do $s_{i\plus 1}$ row operations on $\eta^i$ and $s_{i\plus 1}$ column operations on $\eta^{i\plus 1}$ in the worst case so this step is $O(s_i s_{i\plus 1}+s_{i\plus 1}s_{i\plus 2})$. Note that the validity of row operations can be checked in constant time. Hence, the last step is the dominating one,
and reducing the $s_i$ columns of $\eta^i$ costs
$O(s_i^2 s_{i\plus 1}+s_is_{i\plus 1}s_{i\plus 2})$.

For the procedure to update the valid column operations we have to check all pairs of columns $(I^i_k,I^i_l)$ in $\eta^i$. The check if $I^i_k\lhd I^i_l$ and the check if $I^{i\minus 1}_s\xmapsto{+}I^{i\minus 1}_t$ take constant time. So this step takes $O(s_i^2)$ time, which is dominated by the cost of reducing $\eta^i$.

Iterating over all $\eta^i$, the total cost is therefore
\[\sum_{i=1}^{n\minus 1}O(s^2_i s_{i\plus 1}+s_is_{i\plus 1}s_{i\plus 2})\]
which is both bounded by $O(s^3)$ and $O(ns_\text{max}^3)$.
\end{proof}

\begin{corollary} \label{cor:runtime_simplices}
Given a distinctly-graded presentation of size $N$, the algorithm {\sc Interval-Decomposition} decides interval-decomposability of the corresponding two-parameter persistence module in $O(N^4)$ time. 
\end{corollary}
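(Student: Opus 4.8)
The plan is to read off the corollary from Theorem~\ref{thm:complexity_algorithm} once the parameters $n$, $s_{\max}$, $s$ occurring there have been bounded in terms of the presentation size $N$, together with the cost of preparing the input. First I would observe that {\sc Interval-Decomposition} does not consume a presentation directly: it expects the sequence of matrices $\eta^1,\ldots,\eta^{n\minus 1}$ of \eqref{eq:equivalence} together with a barcode basis for each slice. This is exactly the (uncompressed) graphcode data, which the graphcode algorithm of Section~\ref{sec:computation} extracts from the presentation by a single out-of-order matrix reduction in $O(N^3)$ time; in particular this preprocessing step does not even use the distinctly-graded hypothesis, and the fact that the uncompressed graphcode it produces may have $\Theta(N^2)$ vertices is harmless since $\Theta(N^2)\subseteq O(N^3)$.

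Next I would bound $n$ and $s_{\max}$. A presentation of size $N$ has $O(N)$ generators and $O(N)$ relations, and since it is distinctly graded their $O(N)$ grades are pairwise distinct points of $G(m,n)$; hence only $O(N)$ distinct height coordinates occur, and after compressing the grid we may assume $n=O(N)$ (equivalently: at a height between two consecutive occurring heights every connecting morphism is the identity and the algorithm does nothing, so only $O(N)$ slices are processed nontrivially). Moreover, the bars of $M_{(\minus,i)}$ are in bijection with a subset of the generators active at height $i$, so $s_i=O(N)$ for every $i$ and $s_{\max}=O(N)$. By contrast $s=\sum_i s_i$ may be as large as $\Theta(N^2)$, because a single generator contributes a bar to every slice above its own height; consequently the $O(s^3)$ term of Theorem~\ref{thm:complexity_algorithm} is too weak here, and one must invoke the $O(n\,s_{\max}^3)$ term instead.

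Plugging these bounds into Theorem~\ref{thm:complexity_algorithm} gives $O(n\,s_{\max}^3)=O(N\cdot N^3)=O(N^4)$ for {\sc Interval-Decomposition} itself, and adding the $O(N^3)$ cost of producing the input matrices yields the claimed $O(N^4)$ bound; correctness is inherited from Theorem~\ref{thm:correctness_algorithm}. One small point I would address is that the algorithm has a standing assumption that the bars indexing the rows and columns of each $\eta^i$ are pairwise distinct and can be sorted strictly lexicographically. If the barcode bases returned by the preprocessing happen to contain repeated bars at some slice, I would argue that ties may be broken consistently without affecting correctness: the proof of Theorem~\ref{thm:correctness_algorithm} only uses the ordering to guarantee that the admissible base-change isomorphisms act by left-to-right column additions and bottom-to-top row additions.

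\textbf{Main obstacle.} There is no deep difficulty; this is essentially a bookkeeping corollary of Theorem~\ref{thm:complexity_algorithm}. The step requiring the most care is the translation from the presentation parameter $N$ to the graphcode parameters $n$ and $s_{\max}$ — in particular noticing that the relevant number of slices is $O(N)$ rather than the ambient grid dimension, and that one must use the $O(n\,s_{\max}^3)$ rather than the $O(s^3)$ estimate — and verifying that the presentation-to-$\eta^i$ conversion is genuinely cubic. Discharging the "no identical intervals'' precondition of the algorithm under the distinctly-graded assumption is the only other loose end.
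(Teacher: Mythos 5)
Your proposal is correct and follows the paper's own proof almost verbatim: preprocess the presentation into the slice matrices via the graphcode algorithm in $O(N^3)$ time, observe that the number of distinct heights bounds $n$ by $O(N)$ and the number of active generators bounds each $s_i$ by $O(N)$, and then invoke the $O(n\,s_{\max}^3)$ term of Theorem~\ref{thm:complexity_algorithm}. Your explicit remark that $s=\sum_i s_i$ can be $\Theta(N^2)$ (so the $O(s^3)$ estimate would only give $O(N^6)$) is a useful clarification of a point the paper leaves implicit.

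One small inaccuracy in your aside on the ``no identical intervals'' precondition: you claim the ordering hypothesis is only used to force left-to-right/bottom-to-top operations, but the proof of Theorem~\ref{thm:correctness_algorithm} also invokes distinctness of the bars at each slice to conclude that the normal form is unique (``Since we also assume that all bars at a certain index are distinct, the normal form is unique and $\theta^j=\overline{\eta}^j$''), which is a genuinely stronger use. In fact the paper does not resolve the case of repeated bars either — the conclusion explicitly defers ``the case that the bars are allowed to have equal birth and death values to future work'' — so this is a shared loose end rather than a defect of your argument, but the claim that ties can simply be broken consistently without affecting correctness is not justified by what is in the paper.
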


\begin{proof}
It takes $O(N^3)$ time to compute the graphcode from the presentation. Since there are at most $N$ critical values there are at most $N$ slices. There are at most $N$ generators per slice and, thus, at most $N$ intervals per slice in the graphcode. In the notation of the previous Theorem, $n=s_\text{max}=N$, and the bound follows.     
\end{proof}

\section{Conclusion}
\label{sec:conclusion}
Graphcodes open up a new perspective on two-parameter persistence modules;
the extension to compressed graphcodes, as introduced in this paper, turn
them computationally tractable.
We consider our work only as the starting point for a deeper investigation
of the concept. For instance, instead of translating back and forth between
presentations and graphcodes, we pose the question whether subsequent
algorithms (such as module decomposition or computing other invariants) could
directly operate on the graphcode representation.

We also think that the fast partial decompositions that graphcodes provide will be
helpful as a preprocessing step in other algorithmic tasks. For that,
it is important that the graphcode splits a module into many summands.
We suspect that the ideas of Section~\ref{sec:interval_decomposability}
could lead to heuristics for graphcodes with less edges, giving rise
to finer decompositions than what the standard algorithm in Section~\ref{sec:computation} achieves. 

The interval-decomposability algorithm in its current form achieves a running
time of $O(n^4)$ which is surpassed by the independent discovery~\cite{aida} 
of a $O(n^3)$ algorithm. We strongly believe, however that our interval-decomposability algorithm will have a cubic worst-case complexity when applied on the graphcode computed in Section~\ref{sec:computation}. Indeed, most of
the column additions performed in Section~\ref{sec:interval_decomposability}
can be charged to column additions when computing the graphcode, and there are
only $O(n^2)$ of them in total. We leave this result, as well as the case that
the bars are allowed to have equal birth and death values to future work. 

A further direction to expand this work is to extend graphcodes to two-parameter persistence modules over a product of a totally ordered and a zigzag poset or even two zigzag posets. The ideas of the graphcode based interval decomposition algorithm combined with the ideas of the one-parameter zigzag algorithm could lead to an efficient decomposition algorithm for two-parameter zigzag modules.

\bibliography{bib}

\end{document}